\newtheorem{claim}{Claim}[section]
\newtheorem{lemma}[claim]{Lemma}
\newtheorem{theorem}{Theorem}
\newtheorem{prefact}{{\bf Fact}}[section]
\newcommand{\comment}[1]{}
\def\bR{\mathbb{R}}
\def\bP{\mathbb{P}}
\def\bcE{\bar{\mathcal{E}}}
\def\hcE{\widehat{\mathcal{E}}}
\def\ut{\underline{t}}
\def\ot{\overline{t}}
\def\ui{\underline{i}}
\def\oi{\overline{i}}
\def\he{\widehat{e}}
\def\hx{\widehat{x}}
\def\boldx{\boldsymbol{x}}
\def\boldy{\boldsymbol{y}}
\def\boldw{\boldsymbol{w}}
\def\boldtau{\boldsymbol{\tau}}
\def\boldtw{\widetilde{\boldsymbol{w}}}
\def\tell{\widetilde{\ell}}
\def\expect{\mathbb{E}}
\def\reals{\mathbb{R}}
\def\argmin{\rm{argmin}}  
\def\bessel{\mathrm{I}}
\def\fnr{\rm FNR }
\def\fdr{\rm FDR }
\def\tpr{\rm TPR }
\def\newtof{{\rm ATOF}}
\def\ctof{\rm{TOF}}
\begin{document}

\title{Accelerated Time-of-Flight Mass Spectrometry}
\author{Morteza~Ibrahimi,~\IEEEmembership{Student Member,~IEEE,}
        Andrea~Montanari,~\IEEEmembership{Senior Member, IEEE,}\\
        and~George~Moore,~\IEEEmembership{Senior Member, IEEE}
\thanks{M. Ibrahimi is with the Department of Electrical Engineering, Stanford University, Stanford,
CA, 94305 USA email: ibrahimi@stanford.edu.}
\thanks{A. Montanari is with the Department of Electrical Engineering and Department of Statistics, Stanford University, Stanford, CA, 94305 USA email: montanari@stanford.edu.}
}

%

\date{} 
\maketitle

\begin{abstract}
We study a simple modification to the conventional time of flight mass spectrometry (TOFMS) where a \emph{variable} and (pseudo)-\emph{random} pulsing rate is used which allows for traces from
different pulses to overlap.
This modification requires little alteration to the currently employed hardware.
However, it requires a reconstruction method to recover the spectrum from
highly aliased traces.
We propose and demonstrate an efficient algorithm that can process massive TOFMS data using computational resources that can be considered modest with today's standards.
This approach can be used to improve duty cycle, speed, and mass
resolving power of TOFMS at the same time. We expect this to extend the applicability 
of TOFMS to new domains.
\end{abstract}

\begin{IEEEkeywords}
Time of flight mass spectrometry.
\end{IEEEkeywords}

\section{Introduction}

Mass spectrometry (MS) refers to a family of techniques used to analyze the
constituent chemical species in a sample.
The applications abound in science
and technology and new fields of scientific investigations have evolved around
these techniques.
An example is proteomics which refers to the science of analyzing peptides and proteins.
Proteins are the workhorse of many biological mechanism.  
Of great interest to biological sciences, medical research and drug discovery
and developments is identifying and analyzing the composition and structure of
proteins and other large chemical compounds.
It has become possible only recently to analyze the composition of proteins
with high throughput and accuracy through mass spectrometry techniques \cite{hillenkamp1991matrix} \cite{fenn1989electrospray}.
Other applications include measuring isotopic ratio, space exploration, testing
for illegal substances etc.
Mass spectrometers are usually accompanied with gas or liquid
chromatography techniques and are used in different configurations in tandem with other
mass spectrometer of the same or different types.
These configurations provide a wide range of utility and performance criteria making mass spectrometry relevant for many different applications.

A typical mass spectrometer consists of three main modules: an ionizer, a mass
analyzer, and a detector. 
The ionizer converts the species of interest, and possibly other compounds in the sample to ions in gas phase. 
Recent advances in ionization techniques, namely matrix assisted laser desorption ionization \cite{doi:10.1021/ac00024a002}, and electrospray ionization \cite{fenn1989electrospray} has made it possible to ionize and transform into gas phase large intact molecules like proteins. 
These techniques provided new applications for mass spectrometry and open new avenues for analyzing the composition and structure of proteins \cite{ragoussis2006matrix}.

The purpose of the mass analyzer module is to separate the ions according to their mass to charge ratio (MCR). 
Today's common mass analyzers separate the ions by subjecting them to electromagnetic fields.
These fields exert different forces to different ions.
One class of instruments, broadly referred to as sector instruments cause the ions with different MCR to take different trajectories, effectively beamforming a stream of flying ions of particular MCR toward a detector \cite{cross1951two}
Another technique is to have all the ions travel a common trajectory but with different velocities.
This is the basis for time of flight (TOF) mass spectrometers which we shall describe throughly in the sequel.
A third approach is to \textit{guide} only a particular MCR to have a stable trajectory. 
This is the basis for the Quadrupole and ion trap mass analyzers \cite{schwartz2002two}.
These instruments can act as a MCR filter or scan a wider range by sweeping the filter pass band.

The detector module senses the ions by detecting the impact of charged compounds with the detector surface or the charge or current they induce by their particular motion.

\begin{figure}\center
\includegraphics[width=.5\textwidth]{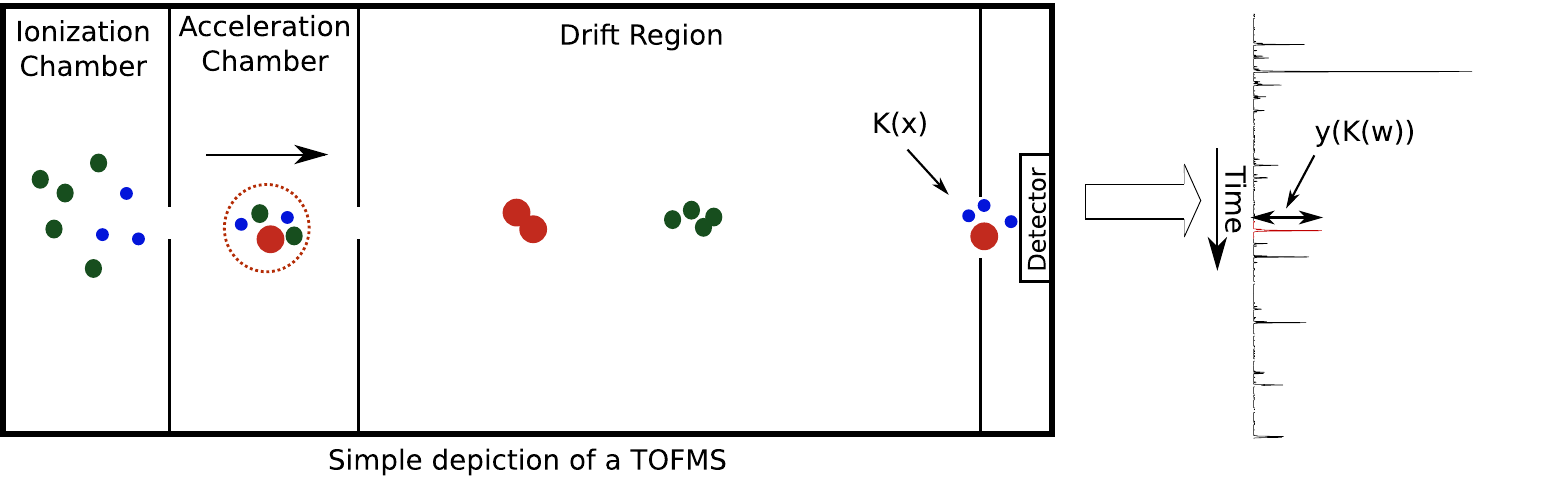}
\caption{Different parts of a TOFMS.}
\label{fig:TOFMS_schematic}
\end{figure}
In this paper we are concerned with time of flight mass spectrometry (TOFMS) which is a simple yet powerful MS technique.
TOFMS was introduced in the 1940s by Stephens
\cite{stephens1946pulsed}.
TOFMS offers two major benefits over alternative techniques.  It has essentially
unlimited mass range and high repetition rate.  These properties along with the
recent advances in available hardware and ionization techniques have made TOFMS
an appealing choice for the analysis of samples with wide mass range
\cite{roberts2004application}, biological macromolecules
\cite{fenn2003electrospray} and in combination with other mass spectrometers
\cite{shevchenko1997rapid}.

A basic TOF mass spectrometer consists of four parts: an ionization chamber, an acceleration chamber, a drift region, and a detector (c.f., Fig~\ref{fig:TOFMS_schematic}).
The sample is ionized in the ionization chamber.
These ions are then subjected to a very strong electrical field
in the acceleration chamber, effectively firing them into the drift region.
Ideally, the ions entering the drift region have kinetic
energy, $K$, proportional to their charge $z$, i.e., if the potential difference in the acceleration chamber is $U$ then the following holds $K = Uz$.
This means that an ion with mass $m$ has velocity $v = \sqrt{\frac{2Uz}{m}}$. 
Therefore, assuming that the length of the drift region to be $L$, the time to reach the detector is
\begin{equation}
t = \sqrt{\frac{m}{2z}}\frac{L}{2U}.
\end{equation}
In other words, the time that takes for an ion to reach the detector is proportional to $\sqrt{m/z}$ where $m$ is the mass of the
ion and $z$ is its net charge.

TOFMS is a pulsed technique, i.e., ions are formed in an ionization stage and
subsequently accelerated as a \textit{packet} into the drift region with ions with different MCR traveling at different speeds.
As the ions impact the detector, they generate a continuous electrical signal which is then sampled resulting in a discrete signal.  
The result of this process, which we call a \textit{scan}, is a noisy sample of the $\sqrt{m/z}$ spectrum.  
A single scan is often too noisy and this process is repeated from hundreds to a few thousands times and averaged to obtain an accurate estimate of the $\sqrt{m/z}$ spectrum.
We will call an estimate of $\sqrt{m/z}$, which is the result of processing many scans, one \textit{acquisition}.
In many applications multiple consecutive acquisitions are obtained: to
construct a movie of an evolving sample like an ongoing chemical reaction; to analyze the output of a preceding chromatography stage; or to mass analyze samples through an automated system where TOFMS instrument in being fed automatically, e.g., in pharmaceutical applications \cite{janiszewski2008perspectives}.  

There are several metrics that describe the performance of a mass spectrometer.
Some of the widely used metrics are: mass resolving power, mass accuracy, dynamic range,
sensitivity and speed.
Mass resolving power is the minimum difference in mass to charge ratio for two present species to be distinguishable by the instrument.
Mass accuracy is the normalized precision with which the instrument can report
the MCR of present species, measured in MCR error divided by MCR.
Mass range refers to the range of MCRs the instrument can detect. 
Sensitivity is the minimum concentration of an specie to be detectable by the instruments.
Finally, speed is the number of acquisition the instrument can acquire
per unit time.

These metrics are not independent.
Several trade offs exist among these
metrics based on how a TOFMS instrument is designed and operated.
For example, speed can be increased, at the cost of mass resolving power,
mass accuracy and sensitivity, by decreasing the number of scans collected for
each acquisition.
Another trade off exists between speed in one hand and mass resolving power and
accuracy on the other hand, which is the focus of this paper and is described in
details below.

In conventional TOFMS, the time between consecutive pulses is set to be long enough to avoid overlap between different scans, i.e., for the slowest ion in an scan to arrive at the detector before the fastest ion of the next scan.
Hence, acquisition time is lower bounded by,
\begin{align}\label{eq:acquisition_time_lbdd}
T_{\text{acquisition}} &\ge N \times T^*_{\text{scan}}  \nonumber \\
& \ge N \times \frac{L}{\sqrt{2U}} \,(\sqrt{(m/z)_{\max}} - \sqrt{(m/z)_{\min}}),
\end{align}
where $N$ is the number of scans collected for each acquisition.
Furthermore, the difference in time of arrival for two ions is 
\begin{equation}
  t_2 - t_1 = \frac{L}{\sqrt{2U}} \, (\sqrt{m_2/z_2} - \sqrt{m_1/z_1}).
\end{equation}

Hence, increasing the length of the drift region $L$ $(i)$ increases the
acquisition time and therefore decreases the speed $(ii)$ spreads the ions further apart and therefore increases the mass accuracy and resoling power. 
This issue is of fundamental importance because of the following. 

Other factors that can improve the mass accuracy and resolving power of
a TOFMS, e.g., detector characteristics and the speed of the electronics, have
reached their limits while new applications demand even better performance in
terms of mass accuracy and resolving power.
One option that remains available for improving the mass accuracy and resolving
power is to increase the length of the drift region.

However, first, there is the obvious desire for higher speed and accuracy at the same time.
Second, some applications have stringent requirements in terms of speed, mass
accuracy, resolving power and sensitivity.
This could be due to exogenous time restrictions, e.g., when monitoring a
chemical reaction or experimentation choice, e.g., when TOFMS is preceded by a
chromatography stage or used in tandem,\comment{\cite{boyd1994linked}} with
another mass spectrometry stage.
There is also significant economical implications, a high end instrument costs
at the order of hundreds of thousands of dollars and improving the speed and
throughput while keeping or improving the accuracy can result in significant
savings.
This is most clear in the case of large scale automated experiments used in drug
discovery and development activities.

Therefore, simultaneous improvement of the speed and mass accuracy and resolving power is of fundamental interest in TOFMS \cite{trapp2004continuous}.

Conventional TOFMS works by repeating the same experiment multiple times and
averaging the results.
The choice of averaging was mainly due to its simplicity.

In particular, the volume and rate of data generated by TOFMS instruments prohibited the use of more sophisticated techniques.
Our ability to commit more computational resources has increased significantly since the introduction of TOFMS.
At the same time, the data rate of these instruments generate has also dramatically increased.
In this paper, we present an efficient, highly parallelizable algorithm that in
conjunction with a simple modification to the conventional TOFMS can improve
mass accuracy, mass resolving power and speed at the same time.

There has been previous work trying to alleviate this problem.  One approach,
called Fourier transform TOF, is to modulate a continuous ion beam at the source
using a periodic waveform and subsequently accelerate it into the drift region
\cite{knorr1986fourier}.  The detected signal is then demodulated to obtain the
spectrum.  Another approach, called Hadamard transform TOF (HT-TOF)
\cite{brock1998hadamard}, is based on modulation (gating) of a continuous ion
source by a $0/1$ pulse. 
In this approach, an ion beam is deflected according to a pseudorandom sequence of pulses. 
If the pulse is $1$, the beam is undeflected and will reach the detector. 
In contrast, if the pulse is $0$ the beam is deflected away from the detector. 
The pulse sequence has the same frequency as the detector.
In an ideal case where there is neither shot noise nor additive noise, the output can be described as 
\begin{equation}\label{eq:HTTOF_input_output} 
\boldy = H \boldx,
\end{equation} 
where $\boldy \in \reals^n$ is the observed signal at the detector and $\boldx \in \reals^n$ is the TOF spectrum.
$H \in \reals^{n \times n}$ is a $0/1$ matrix where each column is the pseudorandom sequence shifted by the index of the column.
As long as $H$ is full rank and the model is accurate the TOF spectrum can be recovered by applying the inverse of $H$ to $\boldy$. 
The spectrum is obtained by a deconvolution that can be implemented efficiently using the fast Hadamard transform.
 
One drawback of these methods is that they treat the reconstruction process as a
deterministic inversion problem and ignore the noisy nature of the observations.
Furthermore, they require substantial modification to the hardware of a
conventional TOFMS.
In this paper,  we describe a different method, called
accelerated time of flight mass spectrometry (\newtof{}), which simultaneously achieves mass resolving power, duty
cycle, and speed improvement using essentially the same hardware as a
conventional TOFMS.  Our reconstruction scheme acknowledges the stochastic
nature of the observation.  Simulation results using real data confirm the
performance improvement of this scheme.

\noindent{\bf Notations and Terminology:}\label{sec:problem_formulation} 
Let $[n] = \{1, 2, \dots, n\}$ and $\{\boldx[i]\}_{i= [n]}$ be the output of the detector for a single scan.
With a slight abuse of notation we also refer to $\boldx$ as a {\em scan}. 
Typically  a TOFMS  experiment consists of many scans which are later processed (simply averaged) to obtain a more accurate estimate of the spectrum. 
Let $\boldx^{(l)}[i]$ be the $l^{th}$ scan.
Define the {\em true spectrum}, $\bar{\boldx}[i]$, as the average of infinitely many scans, i.e., $\bar{\boldx}[i] = \lim_{N \to \infty} \frac{1}{N}\sum_{l=1}^{N}\boldx^{(l)}[i]$.
Each scan $\boldx^{(l)}[i]$ is therefore a noisy version of $\bar{\boldx}[i]$.
We define the {\em trace}, $\boldy[t], \; t = 1, \dots T$, to be the observed detector response for multiple, possibly overlapping scan.
Given an observed trace $\boldy$, the goal is to find 
a \emph{good} estimate $\widehat{\boldx}$  of $\bar{\boldx}$. 
 
In what follows we treat the discrete signals as column vectors.
For $u$ and $v$ two vector of the same dimension, let $v^*$ denote the transpose of $v$ and $\langle u,v \rangle$ the scaler product of $u$ and $v$.

As a matter of convention we refer to each element of the vectors that represent the spectrum ($\boldx^{(l)}$ and $\bar{\boldx}$) as a bin and to that of the trace as a sample, e.g., $\boldy[1]$ represents the first sample of the trace.
When an ion impacts the detector it generates a bell-shaped pulse in the output of the detector. 
We refer to an observed pulse in the trace as an \emph{impact event}, or event for short.
Usually the sampling rate of the detector response is such that an event spans multiple samples. 

For pedagogical reasons, we first describe the algorithm \emph{as if}
each event could occupy only one sample and there was no time jitter, i.e., all the ions of the same species are associated with the same bin. 
We then describe the algorithm without this assumptions with some minor modifications.
All the results presented in this paper are obtained using real data from a conventional TOFMS instrument which is used to simulate the output of an ATOFMS.
The algorithm used to obtain these results is the generalized version of the algorithm.
  
\section{Measurement Scheme and the Data Model}\label{sec:data_model}

A TOF measurement from a single scan is commonly very sparse
(after removing the additive electrical noise through
 preprocessing, c.f. Section \ref{sec:simulation}).
Furthermore, a single measurement of the whole spectrum is not expensive and it can be viewed as being performed in parallel as all ions are flying in the drift region at the same time.
However, the observed signal from a single scan is too noisy and many repetitions of the same measurement are necessary to obtain an accurate estimate of the spectrum.
In a conventional TOFMS setting, the observed trace can be expressed as
$\boldy[t] = \sum_{l=1}^{N} \boldx^{(l)}[t-ln] $
where $\boldx^{(l)}$ is the detector response to the $l^{th}$ scan and $\boldx[i]$ is understood to be zero for $i\le 0$ or $i > n$.

We incorporate a simple, yet powerful, modification to this scheme \cite{moore2012statistical} (c.f. Fig. \ref{fig:ctof_vs_atof_concept}). 
Conventional TOF (\ctof{}) requires collection of many scans, each scan collected
{\em independently} with no overlap.
ATOF idea is to increase the repetition rate and allow the subsequent scans to overlap.
\begin{figure}
  \includegraphics[width=.48\textwidth]{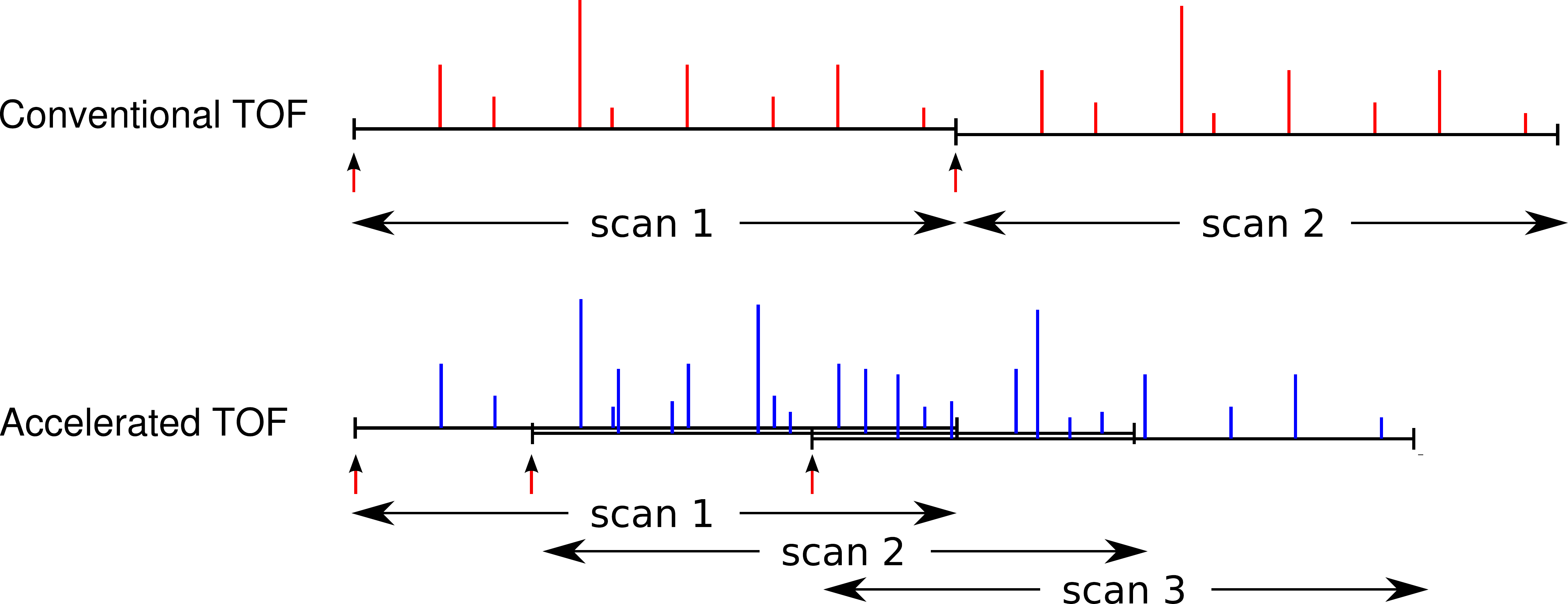}
  \caption{Difference between TOFMS and ATOFMS. In ATOFMS different scans can
  overlap resulting in shorter acquisition time for the same number of scans but
  also a {\em convoluted} observed trace.} 
\label{fig:ctof_vs_atof_concept}
\end{figure}

Define $\tau_l$, the \emph{firing time}, to be the starting time of the $l^{th}$ scan, i.e., the time when the $l^{th}$ ion packet is accelerated into the drift region. 
Define $\Delta \tau_l \equiv \tau_{l+1} - \tau_l$.
In \ctof{} $\Delta \tau_l = \Delta \tau \ge n$ to avoid overlapping between consecutive scans. 
We relax this condition and let $\Delta \tau_l$ be a random variable with  $\expect[\Delta \tau_l] = \alpha n$, for some $\alpha < 1$. 
As is the case with the HT-TOF we assume that the detector response to overlapping scans is the superposition of the individual responses,
\begin{equation}\label{eq:trace_definition} 
\boldy[t] = \sum_{l=1}^{N} \boldx^{(l)}[t-\tau_l]. 
\end{equation} 
In this case at each time $t$, $\boldy[t]$ is the superposition of multiple overlapping scans.
Assume there are a total of $N$ scans and let $0 = \tau_1 < \tau_2 < \dots < \tau_N$ be the firing times.
For a given $\boldtau = (\tau_1, \tau_2, \dots, \tau_N)$, define the matrix $A \in \reals^{T \times n}$ as
\begin{equation}\label{eq:adjacency_matrix}
A(t,i)= \left\{
\begin{array}{l l}
1 \quad &\text{if} \; \exists \; l\in [N] \; \mbox{s.t.} \; i = t-\tau_l \\
0 & \text{Otherwise.}
\end{array}
\right.
\end{equation}
The matrix $A$ can be considered as the adjacency matrix of a bipartite graph (c.f., Fig \ref{fig:bipartite_graph}), with rows of $A$ corresponding to the samples in the trace $\boldy$ and columns of $A$ corresponding to the bins on the spectrum $\boldx$.
Sample $t$ on the spectrum is connected to bin $i$ on the spectrum, i.e., $A_{ti} = 1$, if and only if for some scan $l \in [T]$ the ions from bin $i$ of the scan $\boldx^{(l)}$ arrive at time $t$ in the trace $\boldy$.
In what follows, we will refer to the neighbors of sample $t$ as $\partial t = \left\{i\in [n] \;|\; A(t,i) > 0\right\} $, and similarly to the neighbors of bin $i$ as $\partial i$.

$\boldy[t]$ can be considered as a noisy version of linear measurements of $\bar{\boldx}$, $\langle A_t, \bar{\boldx} \rangle$, with $A_t$ the $t^{th}$ row of $A$ as a column vector.
In this notation, the \ctof{} is a special case where each row of $A$ has only one nonzero element, i.e., measurement $\boldy[t]$ corresponds to a noisy observation of $\bar{\boldx}[i]$ for some bin $i$.
The structure of matrix $A$ reveals the difference between \newtof{} and \ctof{}. 
\begin{equation*}
A_{\scriptscriptstyle{\ctof{}}}= 
{
\begin{pmatrix}
\boldsymbol{1}& 0& 0& 0 \\
0& \boldsymbol{1}& 0& 0 \\
0& 0& \boldsymbol{1}& 0 \\
0& 0& 0& \boldsymbol{1} \\
\hline
\boldsymbol{1}& 0& 0& 0 \\
0& \boldsymbol{1}& 0& 0 \\
0& 0& \boldsymbol{1}& 0 \\
0& 0& 0& \boldsymbol{1} \\
\hline
\boldsymbol{1}& 0& 0& 0 \\
0& \boldsymbol{1}& 0& 0 \\
0& 0& \boldsymbol{1}& 0 \\
0& 0& 0& \boldsymbol{1} 
\end{pmatrix}
}
, \; A_{\scriptscriptstyle{ATOF}}= 
\begin{pmatrix}
\boldsymbol{1}& 0& 0& 0 \\
0& \boldsymbol{1}& 0& 0 \\
0& 0& \boldsymbol{1}& 0 \\
\boldsymbol{1}& 0& 0& \boldsymbol{1} \\
0& \boldsymbol{1}& 0& 0 \\
\boldsymbol{1}& 0& \boldsymbol{1}& 0 \\
0& \boldsymbol{1}& 0& \boldsymbol{1} \\
\boldsymbol{1}& 0& \boldsymbol{1}& \boldsymbol{1} \\
0& \boldsymbol{1}& 0& 0 \\
0& 0& \boldsymbol{1}& 0 \\
0& 0& 0& \boldsymbol{1} 
\end{pmatrix}
\end{equation*}
\begin{figure}
\includegraphics[width=.48\textwidth]{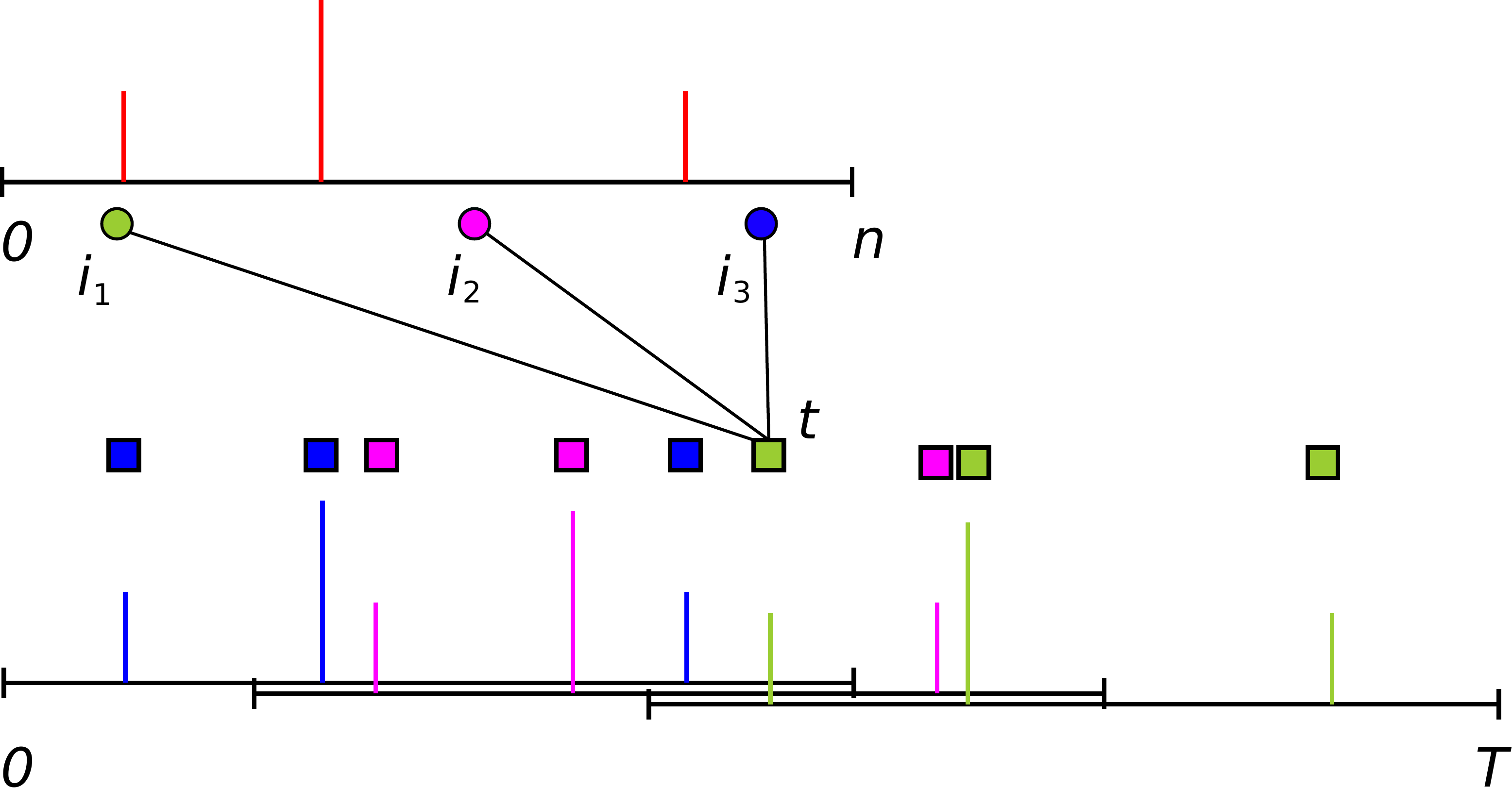}
\caption{Adjacency matrix $A$ and the corresponding bipartite graph. The signal on the top represents the spectrum and the bottom signal represents the trace. The trace is the overlapped concatenation of noisy copies of the spectrum. Nodes are color coded where blue correspond to the first scan, purple to the second, and green to the third. Neighbors of sample $t$ on the trace are those bins on the spectrum who could potentially contribute to an event at sample $t$ (again color coded). }
\label{fig:bipartite_graph}
\end{figure}

Given the trace $\boldy$ and adjacency matrix $A$ one can attempt to solve for $\bar{\boldx}$ using an ordinary least squares, $\hat{\boldx}_{{\scriptscriptstyle LS}} = {\argmin} \|A \boldx - y\|_2$ or $\ell_1$-regularized least squares $\hat{\boldx}_{{\scriptscriptstyle LASSO}} = {\argmin} \|A \boldx - y\|_2 + \lambda \|\boldx\|_1$ \cite{tibshirani1996regression}. 
However, simulation results demonstrate poor performance for both these methods. 
The reason  lies in the choice of the objective function.
Sum of square residuals approximates the negative log likelihood when
the measurement noise is additive Gaussian.
However, TOFMS is dominated by shot-noise which is signal dependent and non-additive.
Similar issues arises in applications like photon-limited imaging where the observations are again shot-noise limited. 
Regularized maximum likelihood approaches proved effective in these settings \cite{harmany2010spiral}.
Here we propose a stochastic model for the observation $\boldy$ and present an algorithm that optimizes the $\ell_1$-regularized log likelihood.

\begin{figure}
\center
\includegraphics[width=.5\textwidth]{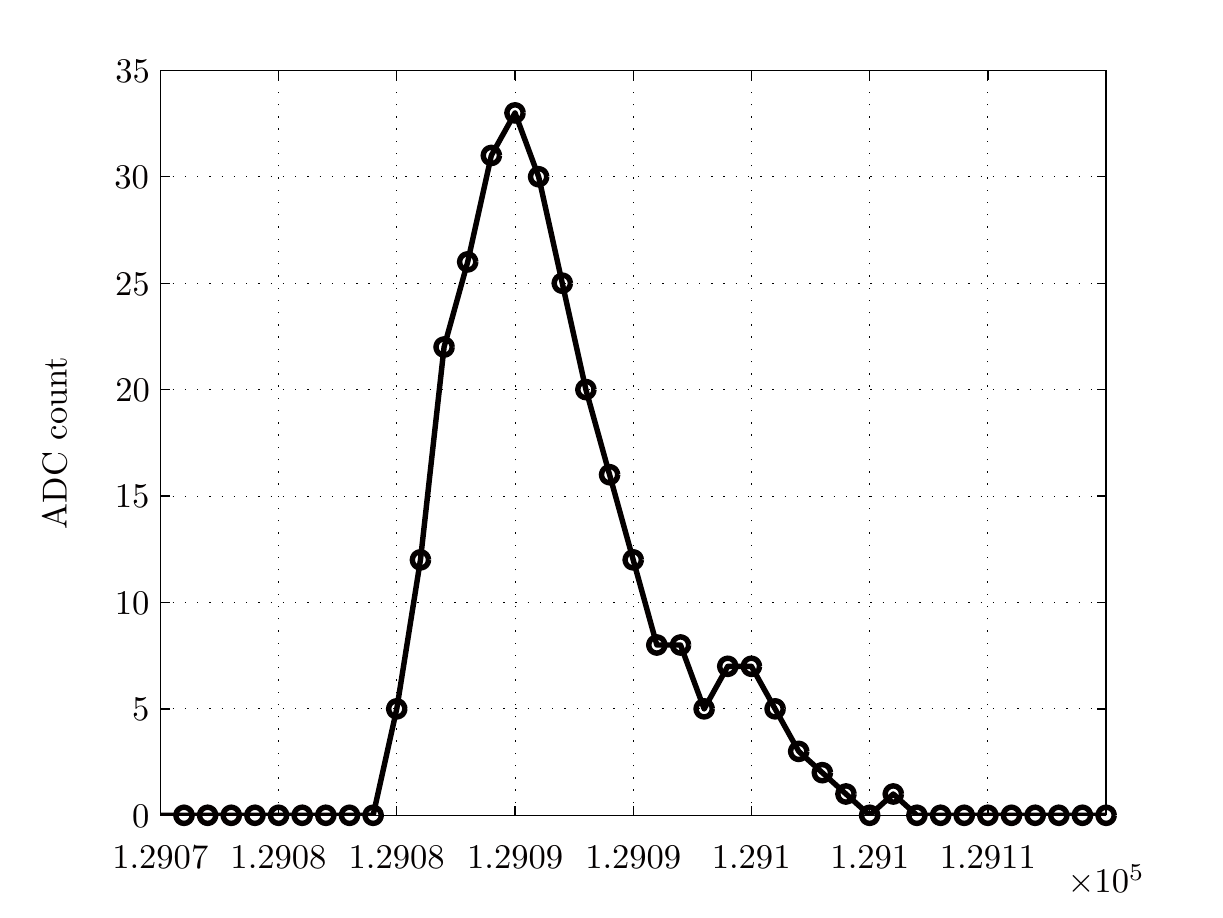}
\caption{A sample event from the trace}
\label{fig:sample_event}
\end{figure}

Figure~\ref{fig:sample_event} shows a sample observed event plotted as the output of ADC vs bin number. 
As in this figure, each such event can span multiple samples. 
However, to simplify the presentation of the algorithm we first assume that each ion impact can occupy only one sample on the trace.
The algorithm is extended to the realistic case of events spanning multiple samples in the next section.
All the experimental results presented in this paper also corresponds to this general case.
However, we have not extended the theoretical results of our paper to the general case.

Define $\boldw \in \reals^n$ such that $\boldw[i]$ is the average number of ions that impact the detector at bin $i$ for a single scan.
In each scan a large number of molecules of each species enter the instrument. 
However, each molecule has a slight chance of passing all the stages of the instrument and reaching the detector.
Therefore, it is a natural choice to assume that the number of ions that impact the detector at time $i$ follows a Poisson distribution with mean $\boldw[i]$.
Figure~\ref{fig:epmf_num_impacts} shows the estimated empirical probability mass function (EPMF) for the number of ions that impact the detector.
Note that here we mentioned \emph{estimated} EPMF since we do not directly observe the number of ion impacts.
What we observe instead is the current at the output of the detector which has an arbitrary scaling.
We estimate the number of ion impacts as follows.
We start with ten thousands acquisition of the same sample and identify a set of \emph{rare} ions as ions that are observed in more than $0.1\%$ but less than $1\%$ of acquisitions.
These ions correspond to chemical species with low concentrations and have small probability of having multiple ion impacts in any acquisition.
We take the median area under the pulse (weight) for these ions as the estimate for the weight of a single ion impact.
We then normalize the weight of all events by this estimate and round it to the closest integer.
Figure~\ref{fig:epmf_num_impacts} shows the estimated empirical probability mass function (EPMF) for the number of ions that impact the detector and its maximum likelihood Poisson fit.

This result indicates that a Poisson model for $\boldy$ is inadequate.
In particular, a Poisson random variable, having its mean and variance tied together, cannot explain the observed variation in the tail.
We assume that the detector response is additive for multiple concurrent impacts \cite{brock1998hadamard}.
Furthermore, as suggested in \cite{wiza1979microchannel} we assume each ion impact generates a cumulative ADC count which itself is an exponentially distributed random variable. 
Therefore, conditioning on the event that $k$ ions impact the detector at a certain time the cumulative ADC count has an Erlang distribution with the shape parameter $k$.
Figure~\ref{fig:epdf_event_weight} shows the empirical probability density function of normalized event weight and its maximum likelihood fit of the Poisson+Erlang model.
Comparing Figs~\ref{fig:epdf_event_weight} and \ref{fig:epmf_num_impacts} reveals that the Poisson+Erlang is a much more satisfactory model for this data.

\begin{figure}
\center
\includegraphics[width=.5\textwidth]{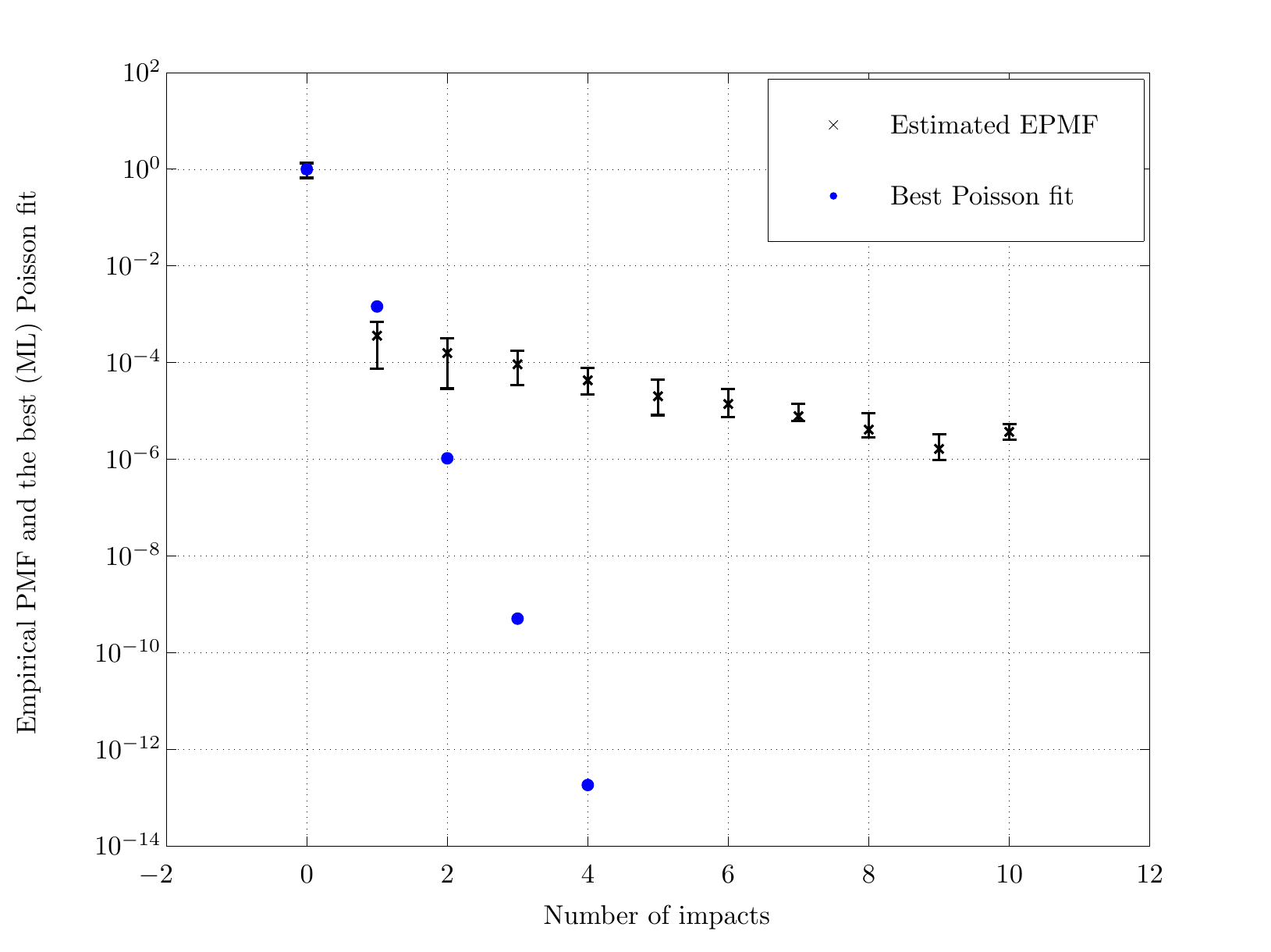}
\caption{Estimated empirical probability mass function (EPMF) for the number of ions that impact the detector and the best (ML) Poisson fit for this data. See Section \ref{sec:generalization} for details of estimating EPMF. This figure shows that a Poisson random variable is not adequate for modeling the number of impacts.}
\label{fig:epmf_num_impacts}
\end{figure}
\begin{figure}
\center
\includegraphics[width=.5\textwidth]{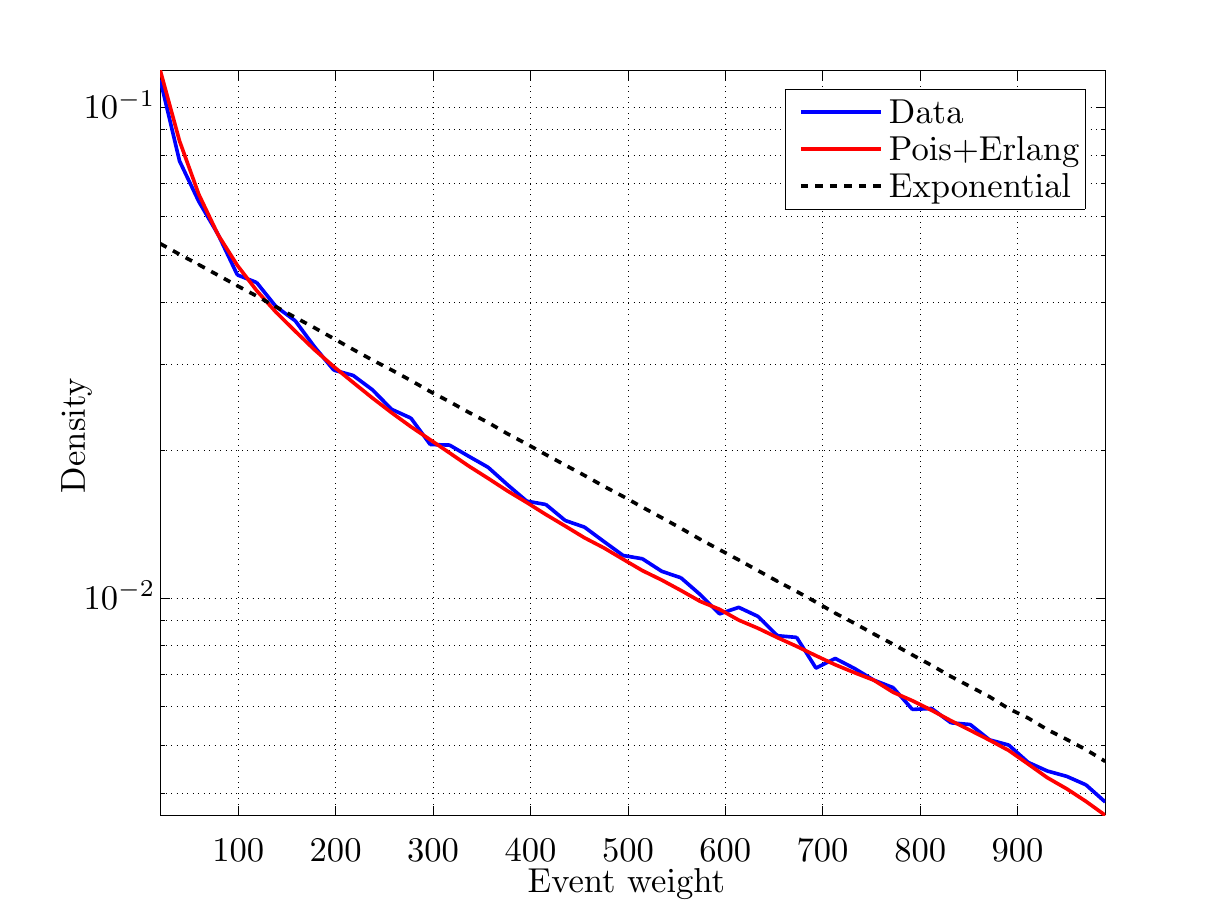}
\caption{Empirical pdf of the normalized observed event weight (area under pulse) and a Poisson+Erlang model fitted to this data.}
\label{fig:epdf_event_weight}
\end{figure}

Let $\mu$ be the mean of the exponential random variable describing the detector response. Given $\boldw$ and $A$ the probability density function of $\boldy[t]$ is
\begin{align}\label{eq:density}
\bP(\boldy[t]|\boldw, A) &= e^{-\langle A_t, \boldw\rangle - w^0} \delta(\boldy[t]) \\
& \quad + \sum_{k=1}^{\infty}{E_{k,\mu}(\boldy[t]) P_{\langle A_t, \boldw\rangle + w^0}(k) },
\end{align}
where $\delta(\cdot)$ is the Dirac delta representing a probability mass at zero and $E_{k,\mu}(\cdot)$ and $P_{\lambda}(\cdot)$ are the Erlang PDF and Poisson PMF defined as 
\begin{align} 
E_{k,\mu}(y) =  \frac{y^{k-1} \exp(-\frac{y}{\mu})}{\mu^{k}(k-1)\!\,!}, \\
P_{\lambda}(k) = \frac{\exp(-\lambda)\lambda^{k}}{k\!\,!}.
\end{align}
$w^0 > 0 $ is a constant accounting for the spurious ion impacts observed at the detector.

Assuming that each ion impact is observed in only one sample of the trace, different trace samples are the result of different ion impacts.
Hence, given $\boldw$ and $A$ the observed responses in different samples can be considered independent, namely
\begin{align*}
\bP(\boldy|\boldw, A) = \prod_{t=1}^T \bP(\boldy[t]|\boldw, A)
\end{align*}
Let $\mathbb{I}(\cdot)$ denote the indicator function, $\mathbf{1} \in \bR^T$ the vector of all ones, and define $\log(0)\mathbb{I}(\text{\small\sc FALSE}) = 0$.
Hence the negative log-likelihood function takes the form 
\begin{align}\label{eq:neg_log_likelihood}
&\ell(\boldw; \, \boldy, A) =  \langle \mathbf{1}, A \boldw \rangle + N w^0 \nonumber \\
&- \sum_{t=1}^{T} \log\left(\sum_{k=1}^{\infty}
\frac{\boldy[t]^{k}(\langle A_t,\boldw\rangle + w^0)^{k}}{(k-1)\!\,! \, k\!\,! \,\mu^{k}} \right) \mathbb{I}(\boldy[t] > 0) 
\end{align}
for $\boldw \ge 0$, and infinity otherwise.
Define $F \subseteq [T]$ as $F = \{t \in [T] |\, \boldy[t] > 0\}$.
Alternatively, for $\boldw \ge 0$ the log-likelihood function can be written as
\begin{align}\label{eq:neg_log_likelihood_2}
& \ell(\boldw; \, \boldy, A) =  N \| \boldw\|_1 \nonumber \\
 & - \sum_{t\in F}\log\left\{\sqrt{\langle A_t,\boldw\rangle + w^0}\; 
 \bessel_1 (\frac{2}{\sqrt{\mu}}\sqrt{\boldy[t]\langle A_t,\boldw\rangle + w^0}) \right\},
\end{align}
where $\bessel_1$ is the modified Bessel Function of the first kind, and we dropped the terms $\frac{1}{2} (\log(\boldy[t])- \log(\mu))$ and $N w^0$ which do not depend on $\boldw$.

We note that the negative log-likelihood function \eqref{eq:neg_log_likelihood_2} is strictly convex in
$\boldw$ which is remarkable given the existence of the \textit{hidden} variable $k$. 
\begin{lemma}\label{lem:neg_log_liklihood_convex}
  The function $\ell_{y, A}(\boldw)$ is strictly convex for $\boldw \ge 0$. 
\end{lemma}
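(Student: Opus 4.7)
The plan is to decompose $\ell$ into an affine part plus a sum of compositions of a single scalar function with affine maps, and then reduce the whole question to strict log-concavity of a Bessel-type function. Starting from \eqref{eq:neg_log_likelihood_2}, the term $N\|\boldw\|_1$ is linear on $\{\boldw\ge 0\}$, so the burden of strict convexity lies on the second term. Since $\boldw\mapsto \langle A_t,\boldw\rangle + w^0$ is affine and nonnegative for $\boldw\ge 0$, it suffices to show that the scalar function
\begin{equation*}
g(s) \;=\; \sqrt{s}\,\bessel_1(2\sqrt{s}) \;=\; \sum_{k=0}^{\infty}\frac{s^{k+1}}{k!(k+1)!}
\end{equation*}
is strictly log-concave on $(0,\infty)$: indeed, with $s = \boldy[t](\langle A_t,\boldw\rangle+w^0)/\mu$, each summand of \eqref{eq:neg_log_likelihood_2} is $-\log g(s)$ up to a constant. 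Convexity of $\ell$ would then follow by composition with affine maps, and strict convexity of the full sum would follow provided the linear functionals $\{\langle A_t,\cdot\rangle : t\in F\}$ jointly span $\bR^n$, which is the natural identifiability condition for the ATOF design graph.

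For the log-concavity of $g$, I would differentiate directly. Using $\bessel_1'(z) = \bessel_0(z) - \bessel_1(z)/z$ and setting $z=2\sqrt{s}$,
\begin{equation*}
\frac{d}{ds}\log g(s) \;=\; \frac{1}{2s} + \frac{\bessel_1'(2\sqrt s)}{\sqrt{s}\,\bessel_1(2\sqrt s)} \;=\; \frac{\bessel_0(2\sqrt s)}{\sqrt s\,\bessel_1(2\sqrt s)} \;=\; \frac{2\,\bessel_0(z)}{z\,\bessel_1(z)}.
\end{equation*}
Since $s\mapsto z=2\sqrt s$ is a strictly increasing bijection, strict concavity of $\log g$ is equivalent to the strict monotone decrease of
\begin{equation*}
R(z)\;:=\;\frac{\bessel_0(z)}{z\,\bessel_1(z)}, \qquad z>0.
\end{equation*}

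The main (and only non-routine) obstacle is to prove that $R$ is strictly decreasing. I would obtain this from two well-known facts about modified Bessel functions: (i) the Tur\'an-type inequality $\bessel_1(z)^2>\bessel_0(z)\bessel_2(z)$ for $z>0$, which combined with the recurrence $\bessel_0(z)-\bessel_2(z)=(2/z)\bessel_1(z)$ implies that $z\mapsto \bessel_1(z)/\bessel_0(z)$ is strictly increasing, and hence that $z\mapsto \bessel_0(z)/\bessel_1(z)$ is strictly decreasing and positive; and (ii) the trivial fact that $z\mapsto 1/z$ is strictly decreasing and positive. The product of two positive strictly decreasing functions is strictly decreasing, so $R$ is strictly decreasing on $(0,\infty)$.

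Assembling the pieces: $-\log g$ is strictly convex on $(0,\infty)$, hence $\boldw\mapsto -\log g\bigl(\boldy[t](\langle A_t,\boldw\rangle + w^0)/\mu\bigr)$ is strictly convex along every direction in which $\langle A_t,\cdot\rangle$ is non-constant. Summing over $t\in F$ and adding the linear term $N\|\boldw\|_1$ yields strict convexity of $\ell$ on $\{\boldw\ge 0\}$ under the identifiability condition above. The hard step is isolating the Bessel monotonicity; the rest is a packaging argument relying on standard rules for convexity under affine precomposition and summation.
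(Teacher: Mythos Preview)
Your reduction to strict log-concavity of $s\mapsto\sqrt{s}\,\bessel_1(2\sqrt{s})$ is the same move the paper makes: there the lemma is dispatched by citing Findling's theorem that $x\,\bessel_1(x)$ is strictly log-concave on $(0,\infty)$, which (after one differentiation) is equivalent to the strict decrease of $\bessel_0/\bessel_1$ and hence of your $R(z)=\bessel_0(z)/(z\,\bessel_1(z))$. So the overall architecture matches; you are simply being explicit where the paper cites a reference.

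There is, however, a small gap in your step (i): the Tur\'an inequality $\bessel_1^2>\bessel_0\bessel_2$ together with the recurrence $\bessel_0-\bessel_2=(2/z)\bessel_1$ does not by itself imply that $\bessel_0/\bessel_1$ is decreasing. Differentiating $\bessel_1/\bessel_0$ and inserting $\bessel_1'=(\bessel_0+\bessel_2)/2$, positivity of the derivative becomes $\bessel_0^2+\bessel_0\bessel_2>2\bessel_1^2$, whereas Tur\'an only supplies the one-sided bound $\bessel_0\bessel_2<\bessel_1^2$, which is not enough. The conclusion you want is true (it is exactly Findling's result), but Tur\'an is not the right lever. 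A cleaner fix, staying entirely within your setup, is to differentiate $R$ directly: with $\bessel_0'=\bessel_1$ and $\bessel_1'=\bessel_0-\bessel_1/z$ one gets
\[
R'(z)\;=\;\frac{\bessel_1(z)^2-\bessel_0(z)^2}{z\,\bessel_1(z)^2}\;<\;0,
\]
since $\bessel_0(z)>\bessel_1(z)>0$ termwise from the power series. This bypasses both Tur\'an and the external citation. Your identifiability caveat (that the $\{A_t:t\in F\}$ must span $\bR^n$ for the sum to be \emph{strictly} convex) is a valid point the paper leaves implicit.
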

\begin{proof}
The proof is immediate using the following theorem due to Findling
\cite{findling1995family}.
\begin{theorem} {(Findling 95)}
  The function $x \bessel(x)$ is strictly log-concave on $\{x \in \reals \;|\; x >0 \}$
\end{theorem}
\end{proof}

A simple transformation of the negative log-likelihood function
\eqref{eq:neg_log_likelihood_2} is insightful.
Let $ \lambda = 2N\mu, \quad \boldtw = \boldw/\mu$, then for $\boldtw \ge 0$,
\begin{align}\label{eq:regularized_NLL_decomposition}
\ell(\boldw; \, \boldy, A) =  \widetilde{\ell}(\boldtw; y, A) + \lambda \|\boldtw\|_1,
\end{align}
where
\begin{equation*}
  \widetilde{\ell}(\boldtw; y, A) =  - \underset{t \in F}{\sum}
  {\log\left(\sqrt{y[t] \underset{i \in \partial t}{\sum}  \boldtw_i} \;
  \bessel_1 \left(\sqrt{y[t] \underset{i \in \partial t}{\sum} \boldtw_i}\right)\right) } .
\end{equation*}
A few remarks are in order. 
First, note that the scaling of the variable $\boldtw$ is irrelevant for our purpose and only the relative values are important.
Hence, this is a single-parameter representation of the NLL function.
This is of great importance for practical systems where optimal tuning of multiple parameters in different operational scenarios can be complicated and require additional expertise.
Second, the single tuning parameter appears as the multiplicative factor in front of the $\ell_1$ regularization term.
Given our intuition about the effect of $\ell_1$ regularization \cite{tibshirani1996regression, zhao2006model} the parameter governs the sparsity of the estimate $\boldtw$, i.e., the number of species that appear in the output.
In what follows we free the parameter $\lambda$ from our original interpretation of it as the product $2 N \mu$ and refer to it as the regularization parameter.
Further, we define the regularized negative log-likelihood cost function $\mathcal{C}_{\lambda}(\boldw; \, \boldy, A)$ as
%
\begin{equation}
  \mathcal{C}_{\lambda}(\boldw; \, \boldy, A) = \tell(\boldw; y, A) + \lambda \|\boldw\|_1 
\end{equation}
%
\section{Algorithm}\label{sec:algorithm}

Given the regularized negative log-likelihood cost function $ \mathcal{C}_{\lambda}(\boldw; \, \boldy, A)$ our algorithm attempts to solve the following optimization problem.
\begin{equation}\label{eq:optimization_problem}
\underset{\boldw}{\text{minimize}} \quad \tell(\boldw; y, A) + \lambda \|\boldw\|_1 \, ,\;\;\;\;\;
\text{s.t.}\;\;\;  \boldw \ge  0, 
\end{equation}
We use the now standard method of iterative soft thresholding to solve this convex but non-differentiable optimization problem.
For a doubly differentiable function $f: \mathcal{D} \subset \bR^n \rightarrow \bR$ let $\nabla f$ and $\nabla^2 f$ be the gradient and Hessian of $f$ respectively. 
Let $\gamma > 0$ be such that $\|\nabla^2 \tell(\boldw; \, \boldy, A)\|_2 < \gamma^{-1}$ for $\boldw \ge 0$. 
It is easy to see that $\gamma$ exists because of the presence of the chemical noise term $w^0$.
We call the parameter $\gamma$ the step size as we use it to scale the steps the algorithm takes in each iteration.
Let $\boldw^{(k)}$ be our estimate of $\boldw$ at step $k$.
Then we can compute an upper bound for the cost function $\mathcal{C}_{\boldy, A, \lambda}(\boldw)$ as follows.
\begin{align}
\mathcal{C}_{\lambda}(\boldw; \, \boldy, A) &\le
\tell(\boldw^{(k)}; \, \boldy, A) + \boldw^* \, \nabla
\tell(\boldw^{(k)}; \, \boldy, A) \nonumber\\
& + \gamma^{-1} \|\boldw - \boldw^{(k)}\|_2^2 + \|\boldw\|_1. \label{eq:cost_func_upper_bound}
\end{align}
Equation ~\eqref{eq:cost_func_upper_bound} provides an approximation for $\mathcal{C}_{\boldy, A, \lambda}(\boldw)$ when $\boldw$ is in a small neighborhood of $\boldw^{(k)}$. Minimizing the right-hand-side of Eq.~\eqref{eq:cost_func_upper_bound} with respect to $\boldw$ as a surrogate for the actual cost function results in
\begin{equation}
\boldw^{(k+1)} = \eta_{\theta}\left( \boldw^{(k)} - \gamma \nabla \tell(\boldw^{(k)}; \, \boldy, A) \right).
\end{equation}
where $\eta_{\theta}(\cdot)$ is the soft thresholding function, $\eta_{\theta}(x) = (|x|-\theta)_+$ with $(\cdot)_+$ being the positive part and $\theta \propto \lambda^{-1}$. 
Note that this is the \emph{one-sided} soft thresholding function which differs from the two-sided soft thresholding function by mapping all negative values to zero.
%
%
From Eq.~\eqref{eq:neg_log_likelihood_2}, $\nabla \tell_{\lambda}(\boldw; \, \boldy, A)$ can be calculated as
\begin{align}
& \nabla \tell_{\lambda}^*(\boldw; \, \boldy, A) = - \sum_{t\in F}\bigg( \frac{1}{2\langle A_t,\boldw\rangle} \nonumber \\
& + \frac{\bessel_0 \left( 2 \sqrt{\boldy[t] \langle A_t,\boldw\rangle}\right) +\bessel_2 \left( 2 \sqrt{ \boldy[t] \langle A_t,\boldw\rangle}\right)}{2 \sqrt{\boldy[t] \langle A_t,\boldw\rangle} \bessel_1 \left( 2 \sqrt{\boldy[t] \langle A_t,\boldw\rangle}\right)} \bigg) A_t.
\end{align}
Given the gradient of the log-likelihood the algorithm is as follows.
\begin{center}
     \begin{tabular}[c]{l}
     \hline
     \textbf{Algorithm} \\
     \hline
     \textbf{Input:} trace $\boldy$, firing times $\boldtau$, and constants ($\theta_0$, $\theta_1$, $\mu$)\\
     \textbf{Output:} estimated spectrum $\widehat{\boldx}$\\
     1:\quad Calculate the adjacency matrix $A$ as in Eq.~\eqref{eq:adjacency_matrix}.\\
     2:\quad  Set $\boldw^{(0)} = 0$, $\theta = \theta_0 + \theta_1$. \\
     2:\quad Repeat until stopping criterion is met\\
       \quad \qquad $\theta \leftarrow \theta_0 + \frac{1}{k^2} \theta_1$\\
       \quad \qquad $\boldw^{(k+1)} \leftarrow \eta_{\theta}\left( \boldw^{(k)} - \gamma \nabla \tell_{\lambda}(\boldw^{(k)}; \, \boldy, A) \right).$ \\
     3:\quad Set $\widehat{\boldx} = 0$\\
     4:\quad For $t \in F$\\
     \quad \qquad $i_* = \underset{i \in \partial t}{\arg\max} \; \boldw[i]$ \\
     \quad \qquad $\widehat{\boldx}[i_*] = \widehat{\boldx}[i_*] + \boldy[t]$ \\
     5:\quad Return $\widehat{\boldx}$.\\
      \hline
      \end{tabular}
\end{center}

Step $4$ in the algorithm is worth noting. 
It was mentioned that each observed event $t$ has a set of possible bins on the spectrum it can be caused by, $\partial t$ (c.f., Fig.~\ref{fig:bipartite_graph}).
The problem of estimating the spectrum from the observed \newtof{} trace can be thought of as assigning each observed event to one of its neighbors which is the true cause of the event.
This framing of the problem enables us to terminate the slow first order optimization method as soon as we are confident about the likely assignment of an event.
In the generalized algorithm which is concerned with the case of multi-sample events this technique proves instrumental in decreasing the bias in the estimated spectrum.

\section{Experimental Evaluation}\label{sec:simulation}

In this section we present performance evaluation results for the \newtof{} algorithm.
We use a commercial TOFMS instrument for data collection and obtain $10,000$ scans for a high concentration multimode chemical sample using conventional TOFMS technique. 
Each scan is \unit{100}{\micro\second} in length sampled at \unit{25}{\pico\second} intervals.
Therefore, in our notation $n=4\times 10^5$.
The average of these ten thousand scans is considered the ground truth. 

For evaluation, we use a random subset of $1,000$ scans and simulate \newtof{} using these scans as follows. 
First, we construct a vector of firing times, $\boldtau = (\tau_0, \tau_1, \dots, \tau_{N-1})$ by setting $\tau_0=0$ and choosing $\Delta \tau_i \equiv \tau_i-\tau_{i-1}$ uniformly at random from the interval $[\Delta\tau_{\min}, \Delta\tau_{\max}]$. 
Given $\boldtau$, the trace is constructed using unaliased scans as prescribed by Eq.~\eqref{eq:trace_definition}.  

\begin{figure}
\center
\includegraphics[width=.5\textwidth]{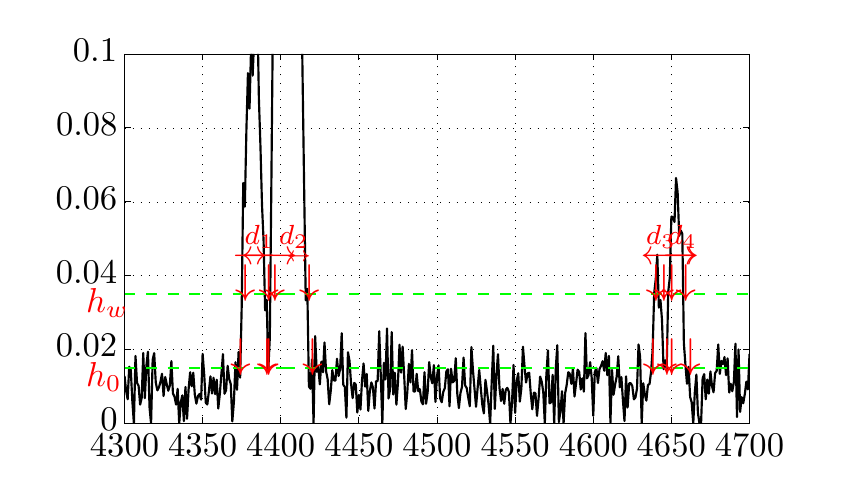}
\caption{Preprocessing the data. There are four pulses that exceed the threshold $h_{w}$ from which three pass the minimum width condition $d_i \ge d_{\min}$ ($1$, $2$, and $4$). The markers at the $h_0$ level indicate the start and end of each marked event.}
\label{fig:thresholding_example}
\end{figure}
\begin{figure}
\center
\includegraphics[width=.4\textwidth]{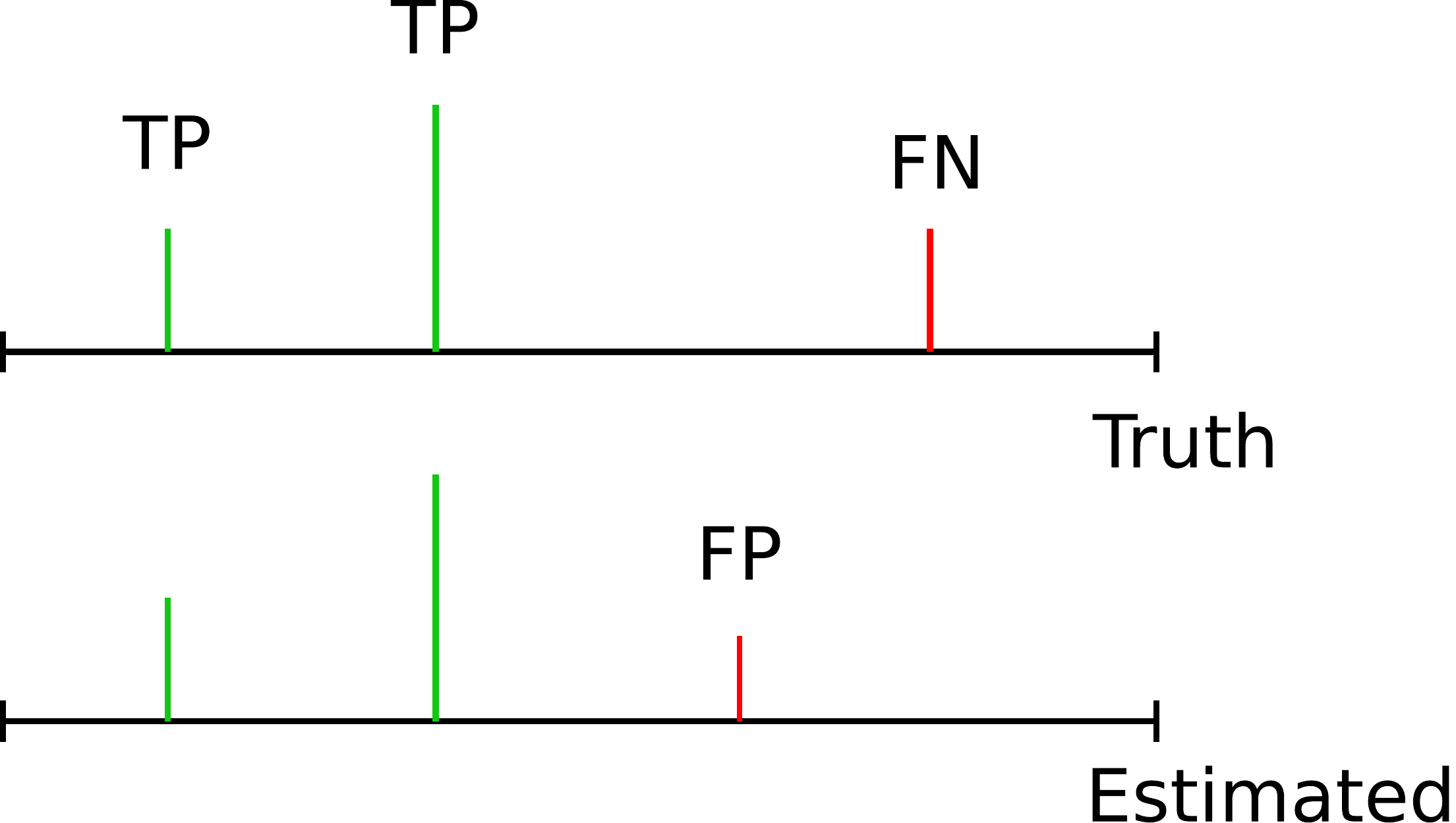}
\caption{Illustration of true positive (TP), false positive (FP), and false negative (FN). }
\label{fig:tp_fp_fn_illustration}
\end{figure}
\begin{figure}
\center
\includegraphics[width=.5\textwidth]{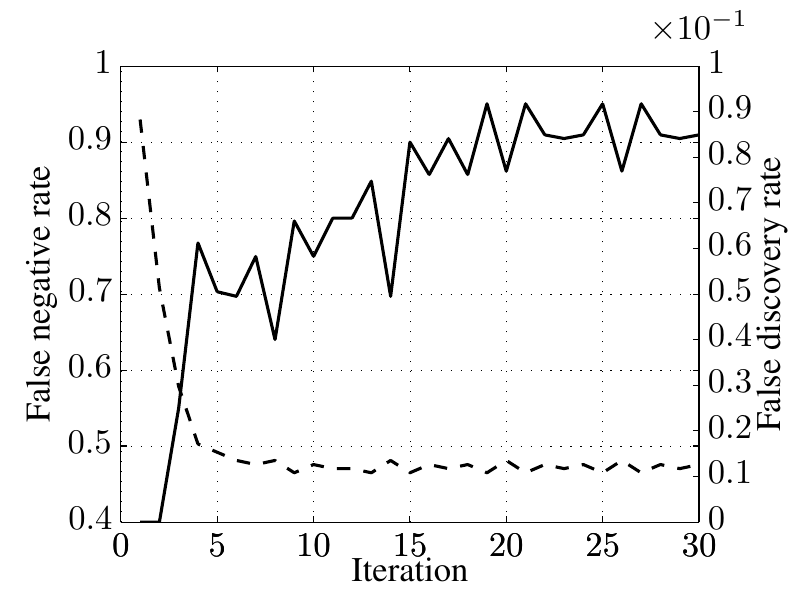}
\caption{False negative rate (dashed) and false discovery rate (solid) vs. iteration.}
\label{fig:fnr_fdr_vs_iteration}
\end{figure}
\begin{figure}[t]
\center
\includegraphics[width=.5\textwidth]{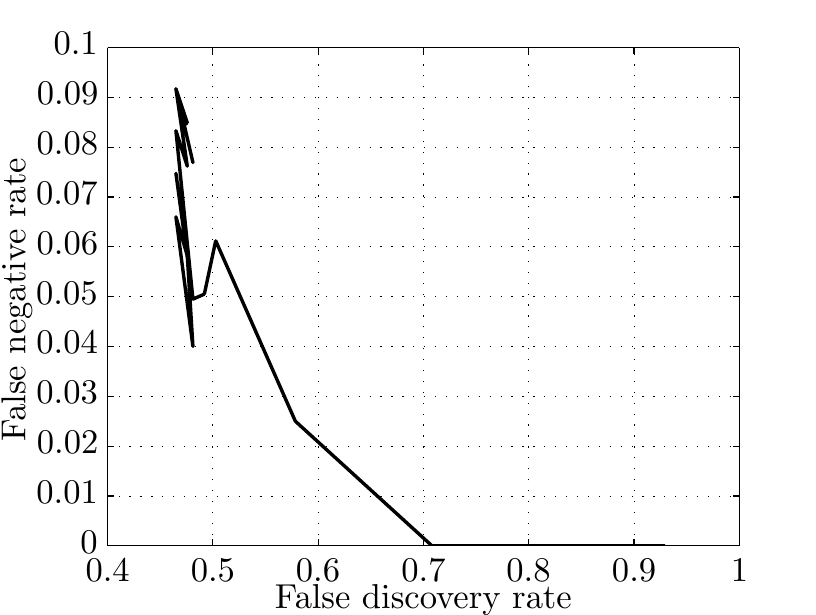}
\caption{False negative rate vs. false discovery rate.}
\label{fig:fdr_fnr_iteration}
\end{figure}

In addition to the aliasing effect, the trace is corrupted by noise. 
Henceforth, we preprocess the trace before applying the reconstruction algorithm by setting the trace to zero unless it is likely to be the result of an ion impact. 
As mentioned before, the detector response to each ion impact is a bell shaped pulse which spreads across multiple samples.
These pulses are corrupted by electrical noise that can be modeled as additive noise. 
However, the electrical noise level is significantly smaller in magnitude compared to the detector response to an ion impact and henceforth ion impacts can be marked with high confidence. 

We select the potential ion impacts through the following procedure. 
Define three constants $h_0$, $h_w$, and $d_{\min}$ and label a pulse as a potential ion impact event if the width of the pulse at hight $h_{w}$ is greater than $d_{\min}$. 
If an event satisfies this criterion the support interval  of the event is determined by thresholding the signal at level $h_0$. 
Figure \ref{fig:thresholding_example} demonstrates this procedure through an example. 
In this figure there are four pulses that exceed the threshold $h_{w}$ in peak magnitude.
From these pulses $d_1$, $d_2$, and $d_4$ satisfy the minimum width condition of $d_i \ge d_{\min}$ at height $h_w$. 
Henceforth, after the preprocessing there are three valid events with the start and end times marked at level $h_0$.
We set the trace equal to zero wherever it does not support a valid event.
After the preprocessing step, the trace can be represented as a list of events whereby each event describes a single pulse in the trace.
Note that observed traces are outputs of an ADC and have an arbitrary scaling.
We keep this scaling but note that the absolute value of the amplitude of the trace and the corresponding parameters like $h_w$ and $h_0$ are irrelevant for our purposes.

Our procedure to identify valid events also enables us to define metrics for quantitative evaluation  of different techniques.
We take the true spectrum $\bar{\boldx}$ to be the average of all $10,000$ scans.
Let $\hx$ be an estimate of $\bar{\boldx}$. 
For some constants $h_0$, $h_w$, and $d_{\min}$ define $\bcE = \{\bar{e}_1, \bar{e}_2, \dots, \bar{e}_{\bar{m}}\}$, and $\hcE = \{\he_1, \he_2, \dots, \he_{\widehat{m}}\}$ to be the set of events in $\bar{\boldx}$ and $\hx$ respectively, obtained through the procedure described above. 
For two events $\bar{e}_i$ and $\he_j$ we say $\he_j$ matches $\bar{e}_i$ if $\bar{e}_i$ overlap with at least $50\%$ of the width of $\he_j$.
For $\bar{e}_i \in \bcE$ we say $\bar{e}_i$ is a false negative if none of the events in $\hcE$ matches $\bar{e}_i$.
For $\he_j \in \hcE$ we say $\he_j$ is a true positive if there exist $\bar{e}_i \in \bcE$ such that  $\he_j$ matches $\bar{e}_i$ and we say $\he_j$ is a false positive if it does not match any event in $\bcE$.
See Fig~\ref{fig:tp_fp_fn_illustration} for an illustration of these concepts.

Let {\rm TP}, {\rm FP}, and {\rm FN} be the number of true positives, false positives and false negatives respectively. 
We consider false negative rate ({\rm FNR = FN/(TP+FN)}), true positive rate ({\rm TPR = TP/(TP+FN)}), and false discovery rate ({\rm FDR = FP/(FP+TP)}) as the metrics of interest.
Note that the notion of a true negative is ill-defined in this problem and hence we cannot use the false positive rate metric.
In particular, observed pulses are of different width and shapes and they can overlap.
Therefore, given an estimated spectrum the question \emph{how many pulses are not observed?} is not well-posed.

\begin{figure}
\center
\includegraphics[width=.5\textwidth]{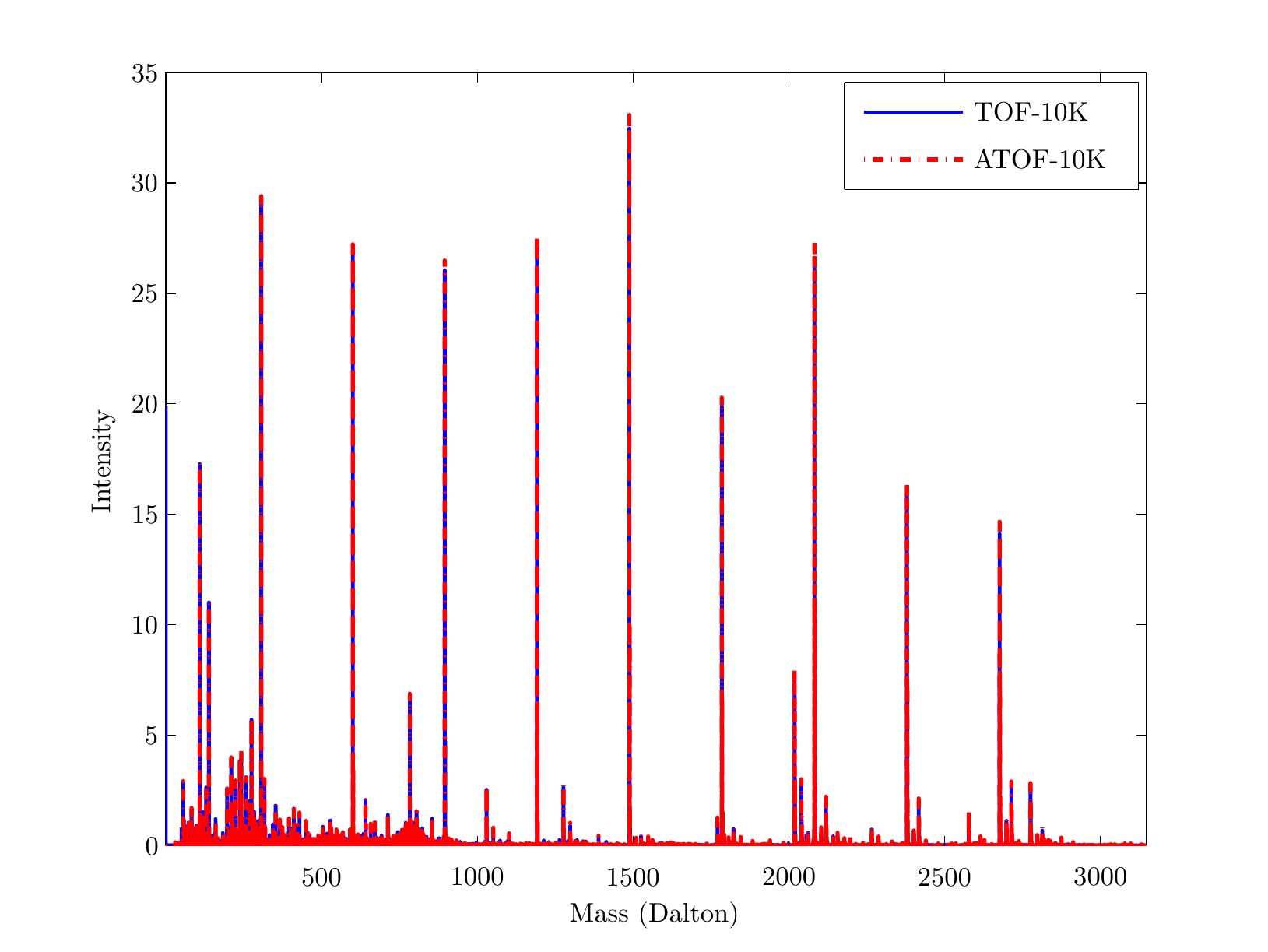}
\caption{Sample reconstructed spectrum for the acceleration factor of $10$.}
\label{fig:sample_spectrum_atof_ctof_1}
\end{figure}
\begin{figure}
\center
\includegraphics[width=.5\textwidth]{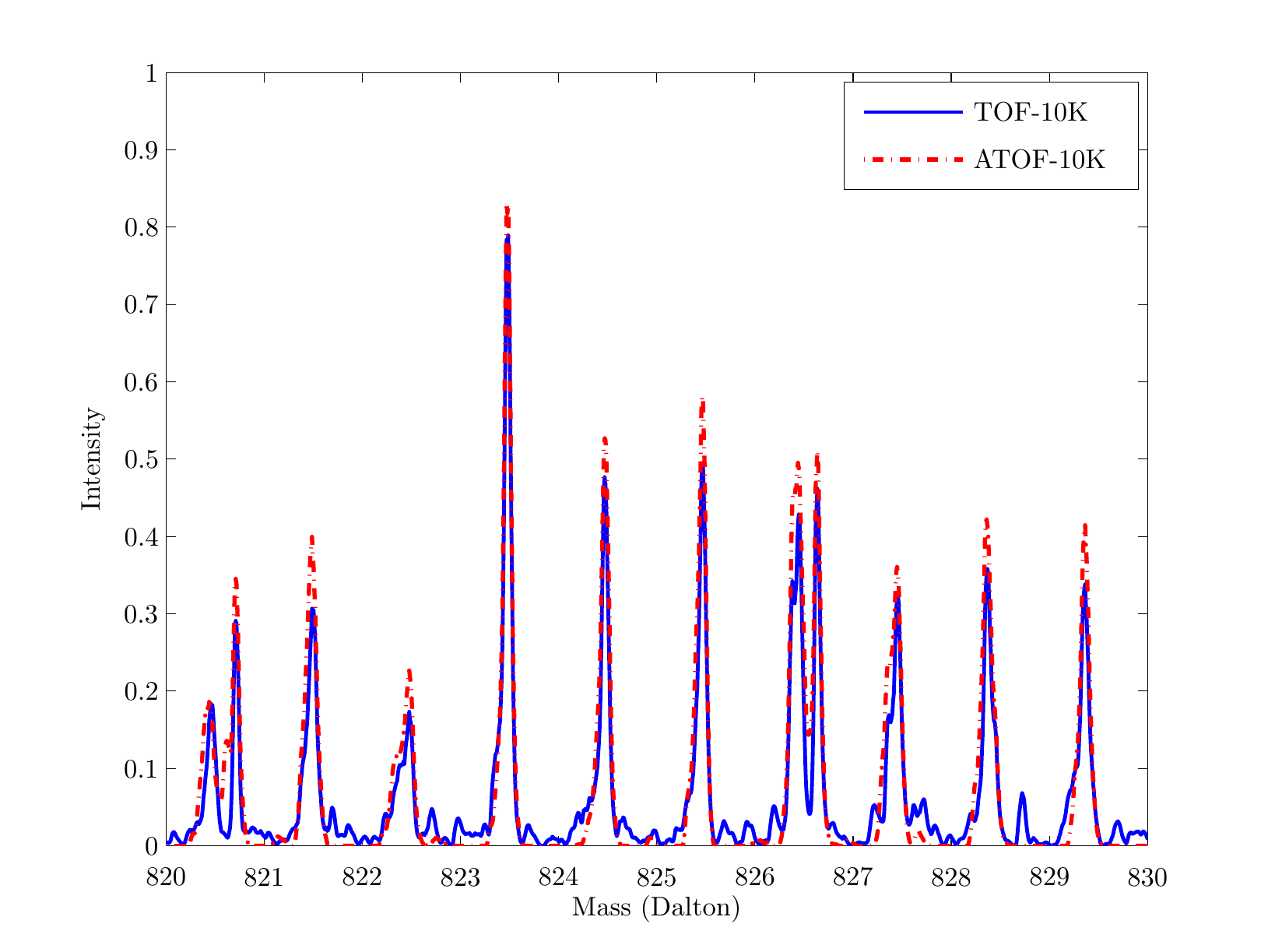}
\caption{Sample reconstructed spectrum for the acceleration factor of $10$. (different scale of Fig~\ref{fig:sample_spectrum_atof_ctof_1})}
\label{fig:sample_spectrum_atof_ctof_2}
\end{figure}

Unless otherwise stated, the parameters used to obtain the results of this section are as follows. $\mu = 225$, $\gamma = 2.5\times 10^{-3}$, $\theta_1 = 2 \times 10^{-2}$, $\theta_0 = 5 \times 10^{-4}$, $h_w^{(\text{trace})} = 2$, $h_w^{(\text{spectrum})} = 0.2$, $n = 4 \times 10^5$, $N = 10^3$, $ \Delta \tau_{\min} = 0$, $ \Delta \tau_{\max} = 2 \times 10^5$.

We also define the acceleration factor as the ratio $\equiv \frac{n}{\expect[\Delta \tau]}$.
For example, $\mathbb{E}\Delta\tau = \frac{1}{4} n$ results in an acceleration factor of $4$, which means that  \newtof{} is four times faster compared to conventional TOFMS in terms of the time it takes to collect the same number of scans. 
At the same time, each event has on average $4$ different positions on the spectrum it can be assigned to and the algorithm should be able to infer the correct position with satisfactory accuracy.  

Figure \ref{fig:fnr_fdr_vs_iteration} shows the false negative rate and false discovery rate as a function of the iteration for the \newtof{} with acceleration factor ten.
The algorithm starts with the all-zero spectrum. Hence, the false negative rate is equal to $1$ at iteration $0$ and as the algorithm proceeds the false negative rate decreases, converging to a final value of $0.47$. 
The false discovery rate on the other hand increases as the algorithm proceeds settling at a final value of about $0.085$. 
Inspecting Fig. \ref{fig:fnr_fdr_vs_iteration} suggest that the algorithm converges, in the sense of establishing the existence of ions, in about $15$ iterations.

Note that the large value of \fdr is by design.
Firstly, by setting the $h_w$ threshold (c.f. Fig~\ref{fig:thresholding_example}) very low we are requiring the algorithm to discover ions with diminutive abundance in the solutions that are observable in the ten thousand scans ground truth but very rarely appear in a random one thousand sample. 
Secondly, in a typical application of TOFMS declaring the presence of an ion that does not exist is considered a more costly mistake than missing an ion that is present.
Hence, in line with these type of applications of TOFMS instruments we choose to operate in a high \fnr and low \fdr regime.
Figure~\ref{fig:fdr_fnr_iteration} shows the same plot on the \fnr vs. \fdr plane.
This figure shows how the algorithm converges as the number of iteration increases.
Figures~\ref{fig:sample_spectrum_atof_ctof_1} and \ref{fig:sample_spectrum_atof_ctof_2} show the ground truth (solid) and reconstructed (dashed) spectrums at two different scale.
Visual inspection of the graphs indicates substantial match between the reconstructed spectrum and the ground truth.

\begin{figure*}
  \begin{subfigure}{.48\textwidth}
    \center
    \includegraphics[width=\textwidth]{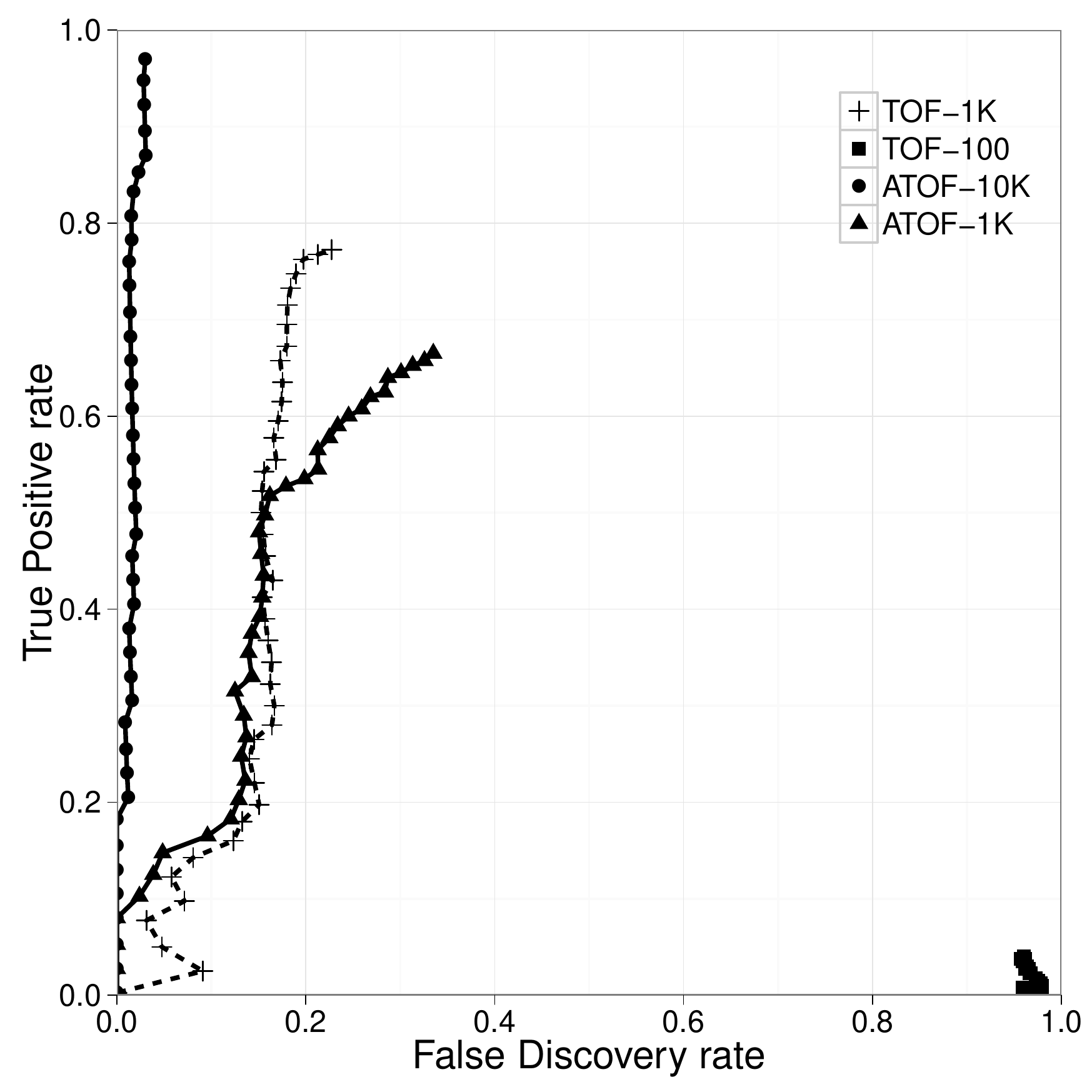} \\
    \caption{$\Delta m = 0.1$ Dalton, top $400$ peaks.}
    \label{fig:tpr_fdr_top_400_deltam_0100}
  \end{subfigure}
  \begin{subfigure}{.48\textwidth}
    \center
    \includegraphics[width=\textwidth]{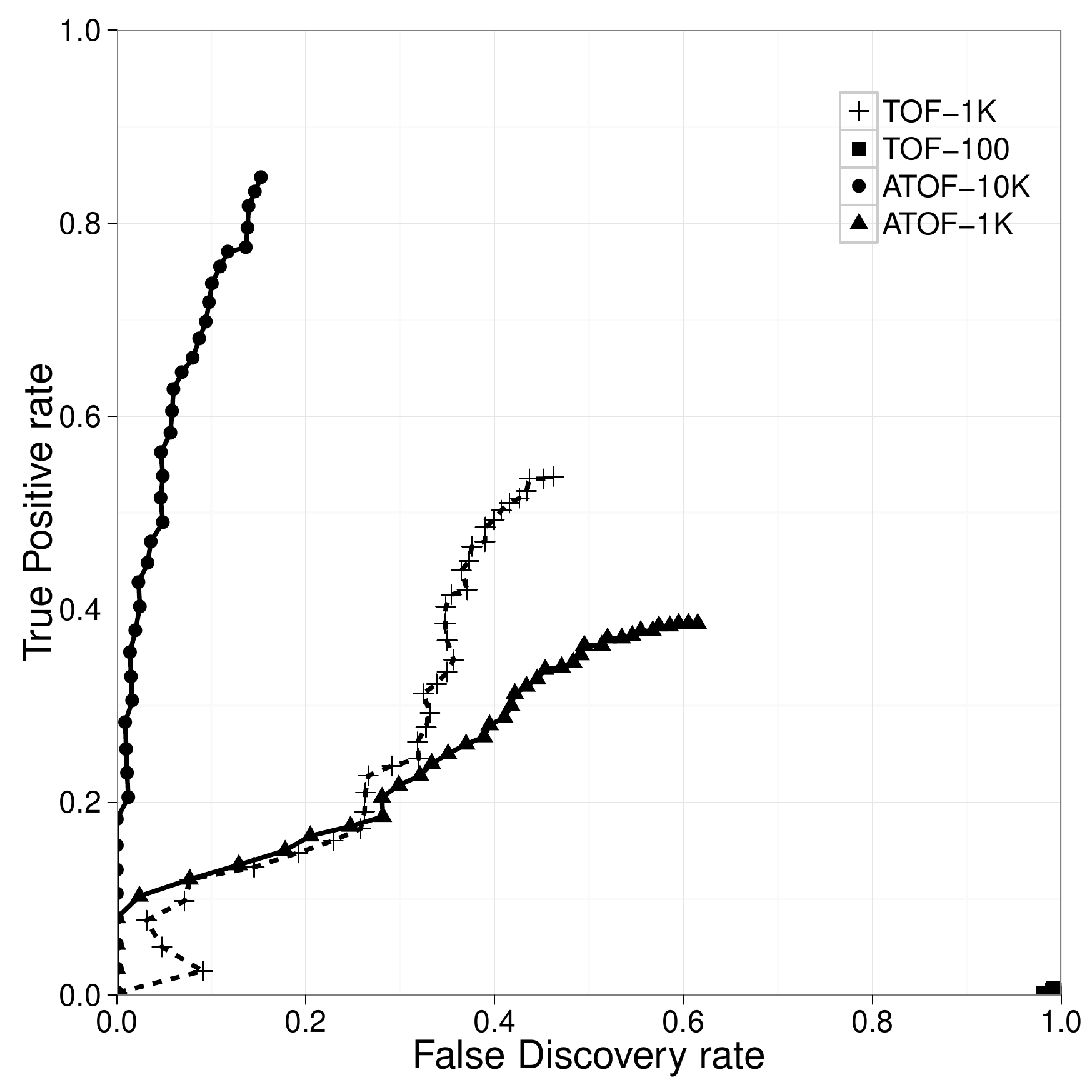} \\
    \caption{$\Delta m = 0.01$ Dalton, top $400$ peaks.}
    \label{fig:tpr_fdr_top_400_deltam_0010}
  \end{subfigure}

  \begin{subfigure}{.48\textwidth}
    \center
    \includegraphics[width=\textwidth]{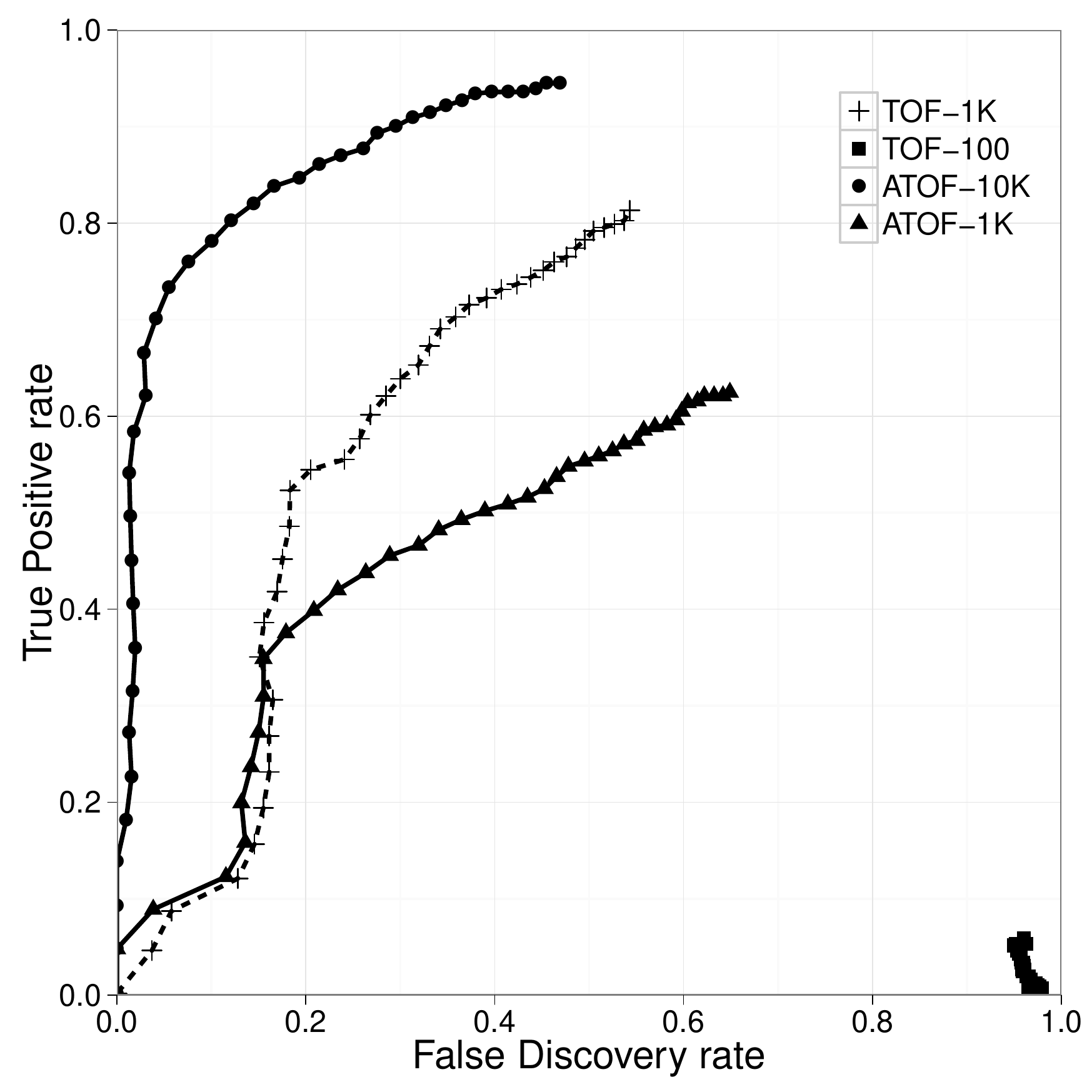} \\
    \caption{$\Delta m = 0.1$ Dalton, top $1000$ peaks.}
    \label{fig:tpr_fdr_top_1000_deltam_0100}
  \end{subfigure}
  \  \begin{subfigure}{.48\textwidth}
    \center
    \includegraphics[width=\textwidth]{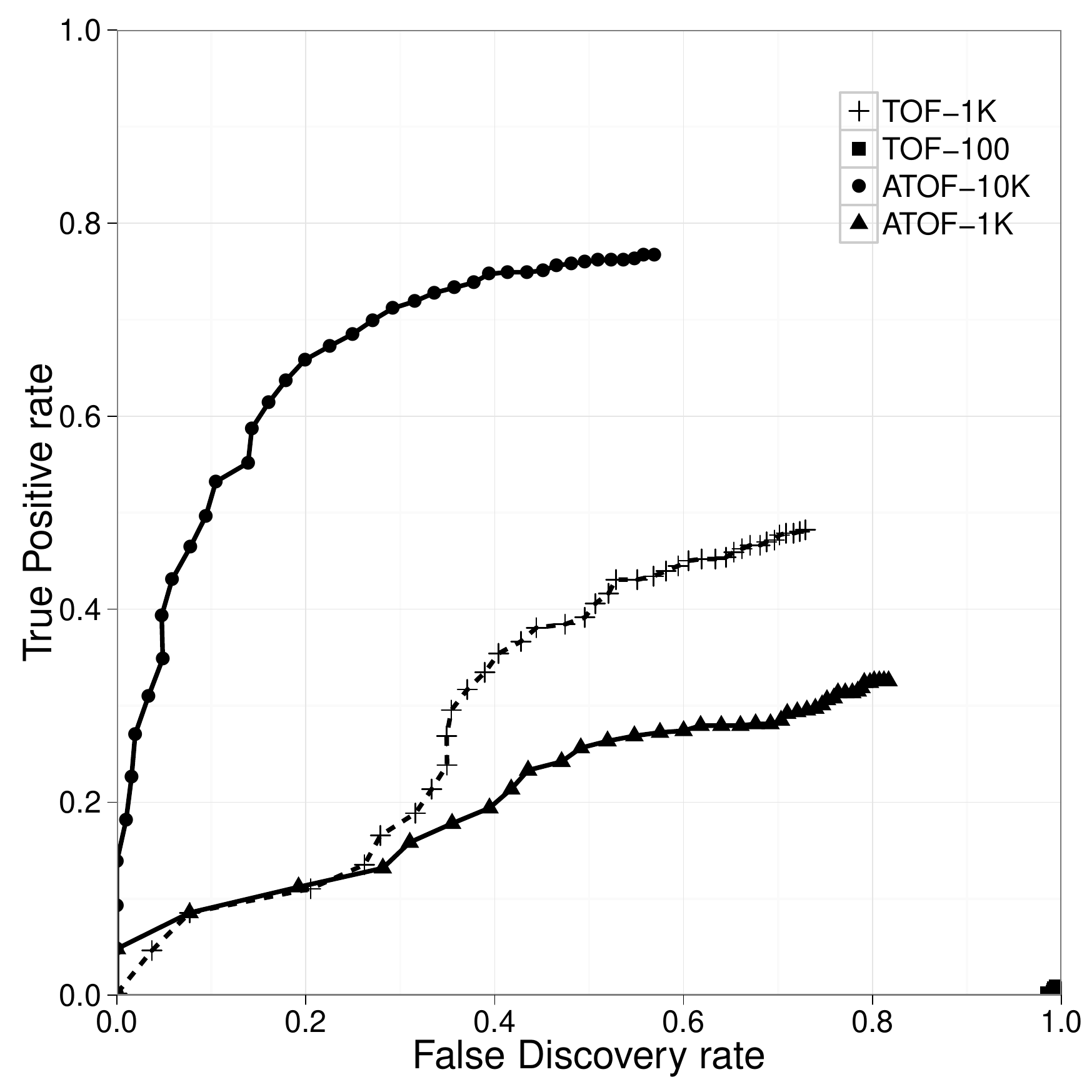}
    \caption{$\Delta m = 0.01$ Dalton, top $1000$ peaks.}
    \label{fig:tpr_fdr_top_1000_deltam_0010}
  \end{subfigure}
  \vspace{2pt}
  \caption{\tpr vs. \fdr for different values of $\Delta m$ and number of peaks. The effect of $h_w$ can be seen by comparing top and bottom plots. Lowering $h_w$ admits more smaller peaks as valid peaks in the ground truth making it more difficult to detect them. Comparing left and right plots shows the uncertainity in determinig MCR.}
  \label{fig:tpr_fdr}
\end{figure*}


The output of a TOFMS is usually used to generate a list of \emph{peaks}, i.e., the list of MCRs that are deemed present by the instrument.
Peak picking is an important task for a TOFMS instrument that can have significant effect on the instrument performance and the practice involves both publicly available methods as well as patents and trade secretes. 
These estimated peaks can be a few order of magnitudes more precise than the width of the pulses at the estimated spectrum.

Another more practically important but less transparent method for defining TP, FP, and FN is to use the list of peaks generated from the estimated spectrum.
We use the peak picking software that ships with Agilent Technologies TOFMS.
Since the estimated peaks are real-valued variables we also need to consider a precision.
With a slight abuse of notation we define two peaks to match if they are within $\Delta m$ distance of each other on the MCR scale, i.e., $\sqrt{m_1/z_1} - \sqrt{m_2/z_2} \le \Delta m$.

Figure \ref{fig:tpr_fdr} shows the TPR vs FDR for different values of $\Delta m$ and number of peaks in the ground truth.
Each plot contains four curves corresponding to \ctof{} with one hundred scans (\ctof-100), \newtof{} with one thousand scans and acceleration factor 10 (\newtof-1K), \ctof{} with one thousand scans (\ctof-1000), and \newtof{} with ten thousand scans and acceleration factor 10 (\newtof-10K).
Note that (\ctof-100) and (\newtof-1K) have the same acquisition time.
Similarly, (\ctof-1K) and (\newtof-10K) have the same acquisition time.
For these plots we run the peack picking algorithm on the ground truth spectrum and obtain a list of picks.
These picks are sorted based on their amplitude and we choose the $k$ most significant picks where $k \in \{400, 1000\}$.
We then run the peak picking algorithm on the reconstructed spectrums and obtain the list of peaks.
Similar to the process for the ground truth we keep the $k$ most significant peaks for each of these spectrums.
A peak in an estimated spectrum is considered to match a peak in the ground truth if and only if their estimated MCRs are within $\Delta m$ of each other.
For the top $400$ peaks and $\Delta m = 0.1$ (Fig.~\ref{fig:tpr_fdr_top_400_deltam_0100}, \newtof-$10$K achieves a nearly perfect reconstruction.
TOF with $1$K scans (TOF-$1$K) demonstrates acceptable but significantly inferior performance compared to \newtof-$10$K
This is while \newtof-10K and TOF-1K have the same acquisition time.
\newtof-1K and TOF-1k have comparable performance for low TPR but TOF-1k outperforms \newtof-1K for high TPR.
Note that \newtof-1K is ten times faster than \ctof{}-1K.
\ctof-$100$ scans do not have enough information to achieve any significant accuracy in this regime.
Figure~\ref{fig:tpr_fdr_top_400_deltam_0010} shows the same curves when we decrease $\Delta m$ to $0.01$, i.e., when we adopt a more restrictive definition of two peaks matching.
The overall trend is similar to that of Fig.~\ref{fig:tpr_fdr_top_400_deltam_0010}.
Figs~\ref{fig:tpr_fdr_top_1000_deltam_0100} and \ref{fig:tpr_fdr_top_1000_deltam_0010} are similar but for the top one thousand significant peaks.
The TPR and FDR degrade for all the cases since now we are expecting many more smaller peaks to be detected.
However, the relative performance of different methods and configurations remain unchanged.

\begin{figure}
\includegraphics[width=.5\textwidth]{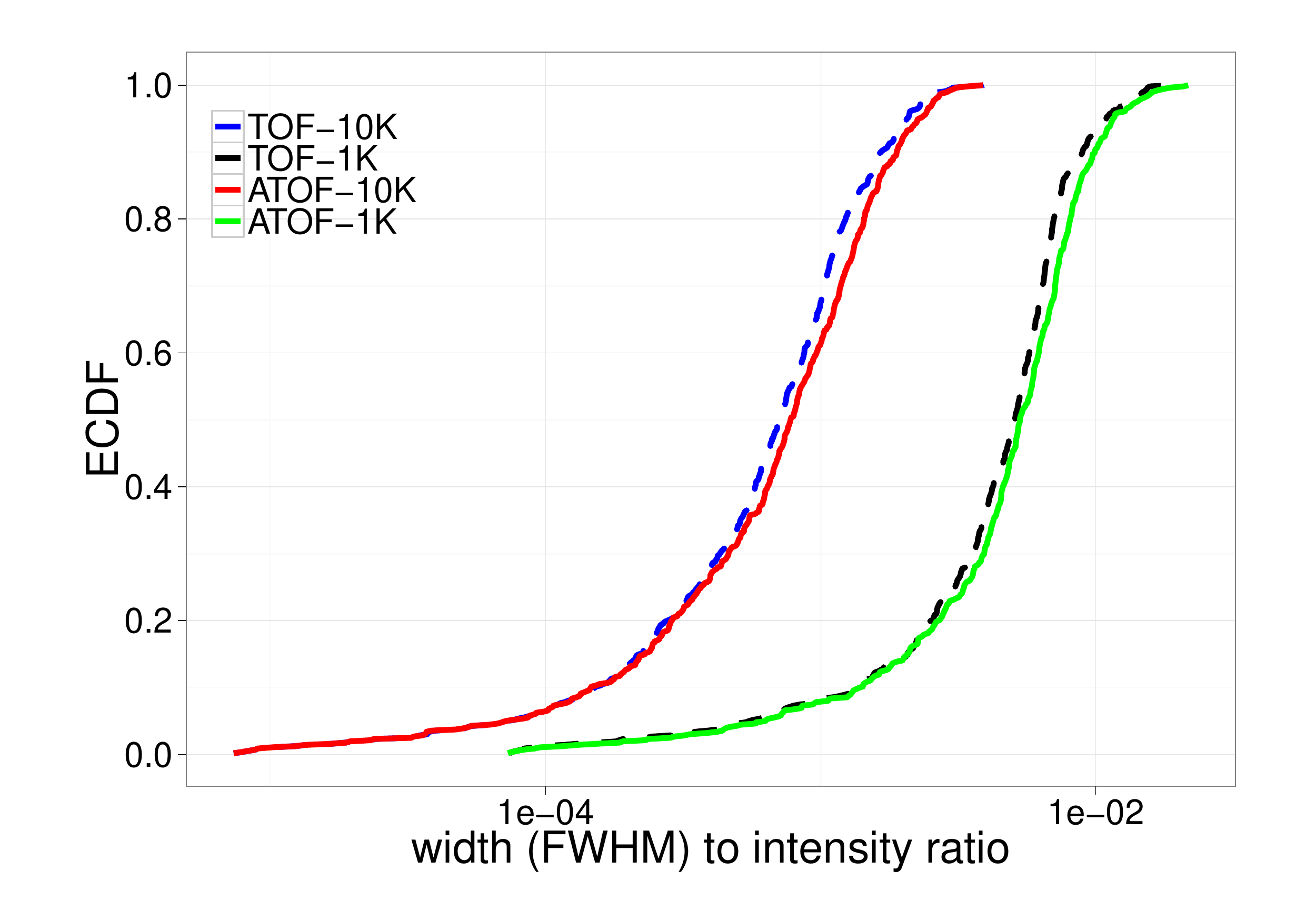}
\caption{Empirical CDF of the width (full width at half maximum) to intensity ratio for the output of \ctof{} (dashed) and ATOF (solid). ATOF results in little pulse broadening compared with \ctof{} with the same number of scans but $10$ times longer acquisition time.}
\label{fig:width_intesity_ratio}
\end{figure}
\begin{figure}
\center
\includegraphics[width=.5\textwidth]{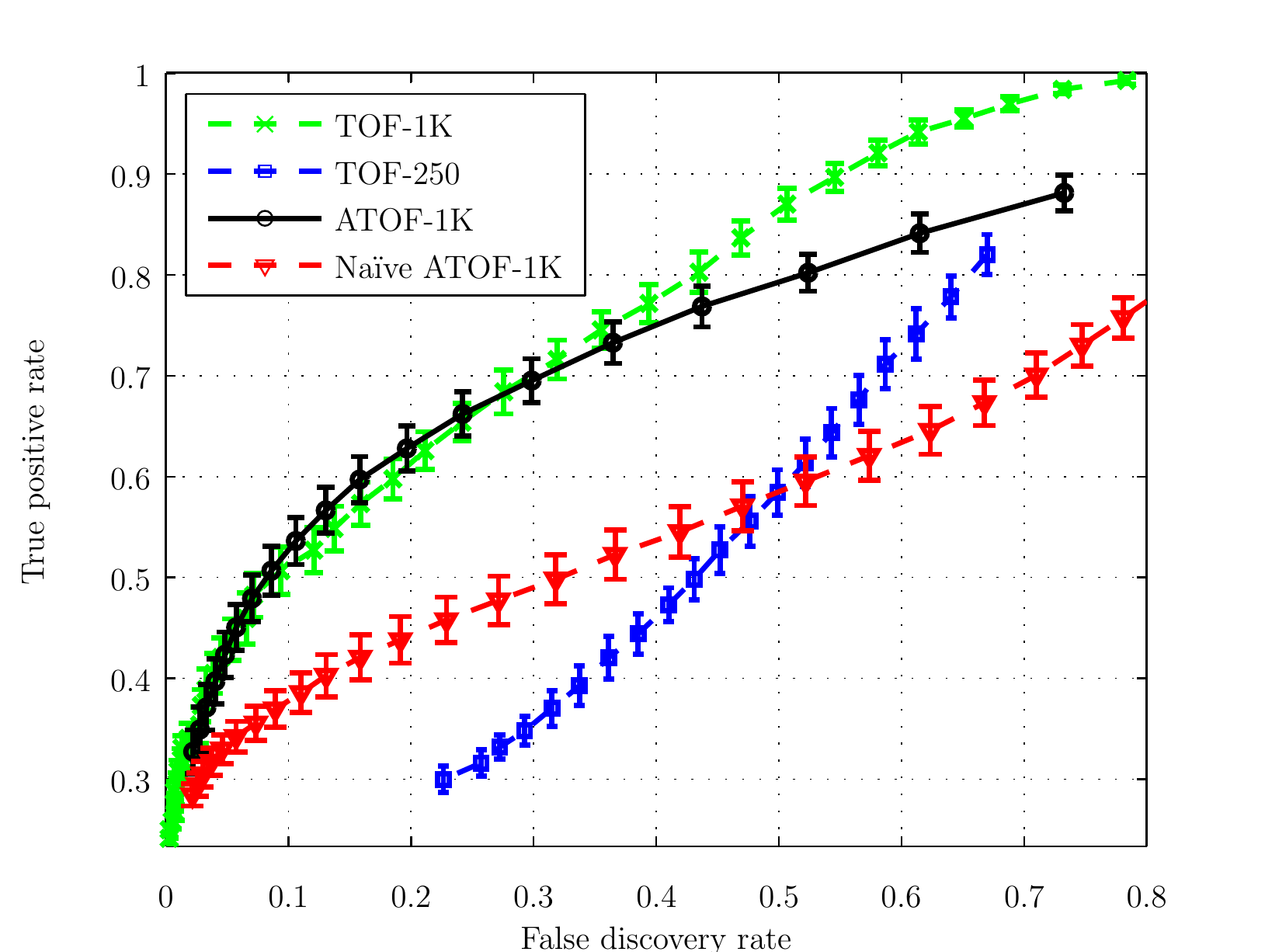}
\caption{True positive rate vs false discovery rate}
\label{fig:tpr_fdr_theta}
\end{figure}

The goal of a TOFMS instrument is to precisely measure the MCR of the present ions. 
Hence, for the instrument to be able to perform accurate peak picking it is important that \newtof{} does not significantly distort the shape of the pulses at the estimated spectrum.
Figure~\ref{fig:width_intesity_ratio} shows, for \newtof{} and \ctof{} and for different number of scans and acceleration factor ten, the empirical CDF of the width to intensity (hight) ratio of the peaks (a measure of peak spikiness).
As can be seen \newtof{} introduces little broadening in the peaks.

To better understand the achieved accuracy, we compare \newtof{} and three other cases in Fig.~\ref{fig:tpr_fdr_theta}.
We run multiple randomized sample reconstruction and calculate bootstrap confidence intervals.
The figure shows the true positive rate vs. false discovery rate, as
parameterized by $\theta_0$ for the estimated spectrum $\widehat{\boldx}_{\newtof{}}$ obtained by the \newtof{} with one thousand scans and acceleration factor 4.
Each data point is obtained dividing the ten thousand scans into ten buckets of one thousand scans each.
The trace is constructed using one thousand scans from one bucket.
The average of the other nine buckets is considered as the ground truth. 
Error bars indicate $2\hat{\sigma}$ confidence intervals where $\hat{\sigma}$ is the standard error estimate.  

We Also plot  the corresponding curves for three other cases.
The red curve corresponds to the \emph{na\"{\i}ve \newtof}, obtained by mapping each event to all possible positions on the spectrum,
\begin{equation}
\widehat{\boldx}_{N}[i] = \sum_{t \in F}\frac{1}{\textrm{deg}_t}A_{tj} \boldy[t],
\end{equation}
where $A_{tj}$ is the adjacency matrix defined in
Eq.~\eqref{eq:adjacency_matrix} and $\textrm{deg}_t = \sum_{j=1}^n A_{tj}$ is the number of positions on the spectrum that event $t$ can be mapped to.
The na\"{\i}ve \newtof uses the simplest way of processing the \newtof trace where one assumes that each event is equally likely to be caused by an ion from any of its potential locations on the spectrum.

The green (TOF-1K) and blue (TOF-250) curves correspond to  conventional TOFMS, i.e., the spectrum obtained by averaging the scans when there is no overlapping,
\begin{equation}
\widehat{\boldx}_{\rm ave} = \frac{1}{N} \sum_{l=1}^{N} \boldx^{(l)}.
\end{equation}
In TOF-250, the number of scans is chosen such that the time which takes to perform the TOFMS and \newtof{} are the same.
TOF-1K corresponds to TOFMS with the same number of scans as \newtof, which has an acquisition time $4$ times longer compared to the \newtof{}.
Equivalently, one can think of TOF-1K corresponding to the case where an oracle is available for \newtof{} experiment that to each event assigns its true position on the spectrum. 

The na\"{\i}ve \newtof and conventional TOFMS curves are parameterized by the thresholding parameter $h_w$ which is used to identify the events in the estimated spectrum.
Similar to $\theta_0$, $h_w$ enables us to obtain a trade off between true positive rate and false discovery rate.  

This comparison shows that \newtof{} significantly outperforms the conventional TOFMS.
One the one hand, it allows for a four-fold speed-up with essentially unchanged accuracy (comparison with TOF-1K).
On the other, it allows a two-fold increase in true positive rate for ${\rm FDR}= 0.2$ if the experiment duration is unchanged (comparison with TOF-250).


\section{Events that span more than one sample}\label{sec:generalization}
In order to simplify the presentation of the main ideas in the previous sections we assumed that all ion impacts are confined to one sample in the observed trace. 
However, as was evident from Fig.~\ref{fig:sample_event} this assumption is far from being true. 
In this section we show how the algorithm presented in Section \ref{sec:algorithm} can be extended to the case where each ion impact event can spread across multiple samples in the trace. 

Consider the conditional probability of an observation $\boldy[t]$ from Eq.~\eqref{eq:density} which is repeated below for convenience. 
\begin{align}\label{eq:density2}
\bP(\boldy[a]|\boldw, A) &= e^{-\langle A_a, \boldw\rangle - w^0} \delta(\boldy[a]) \nonumber\\
& \quad + \sum_{k=1}^{\infty}{E_{k,\mu}(\boldy[a]) P_{\langle A_a, \boldw\rangle + w^0}(k) }.
\end{align}
%

Assume instead of being confined to one sample the ion impact event $a$ is spread from time $\ut_a$ to $\ot_a$. 
Define the weight of ion impact $a$, $z_a$, as
\begin{equation}\label{eq:impact_weight}
z_a = \sum_{t = \ut_a}^{\ot_a} \boldy[t].
\end{equation}
Recall that $\boldw[i] + w^0$ represents the arrival rate of ions at bin $i$ for a single scan, i.e., $\boldw[i] + w^0$ is the expected number of ions that arrive at bin $i$ in a single scan.
Henceforth, assuming that ions arrive independent of each other $\langle A_t, \boldw + w^0 \mathbf{1}\rangle$ is the cumulative mean of the number of ions that arrive at time $t$ where $\mathbf{1}$ is the vector of all ones.
Given the assumption that an ion impact being confined to only one sample we used $\langle A_t, \boldw + w^0 \mathbf{1}\rangle$ as the expected number of ions that caused the observation $\boldy[t]$.
Here, we make the alternative assumption that for an ion impact observation spanning the interval $[\ut_a, \ot_a]$ the expected number of ions responsible, $K$, is given by 
\begin{equation}\label{eq:cumulative_rate}
  \expect[K] = \sum_{t = \ut_a}^{\ot_a} \langle A_t, \boldw + w^0\rangle
\end{equation}
Let the generalized neighborhood of an event be
\begin{align}\label{eq: gen_neighbor_def}
  \partial a = \{i \in [n] \,|\, \exists \, t \in [\ut_a, \ot_a] \; \text{s.t.} \; A_{ti} > 0 \}.
\end{align}
Then, Eq.~\ref{eq:cumulative_rate} can be written as 
\begin{equation}\label{eq:cumulative_rate_2}
  \expect[K] = \sum_{i \in \partial a}(\boldw[i] + w^0).
\end{equation}
In other words, we are assuming that an observed event can be caused by one or multiple ions impacting at any time during the span of the event.

Similar to Section \ref{sec:data_model} we assume that $z$, the weight of an ion impact, is distributed as an Erlang random variable with shape parameter $K$ where $K$ is a Poisson random variable with mean given in Eq. \eqref{eq:cumulative_rate}.
Let $a$ be an ion impact event with weight $z_a$ and expected number of ions $\sum_{i \in \partial a}\boldw[i]$. Then, similar to Eq. \eqref{eq:density} the probability of observing $a$ given $\boldw$ and $A$ can be written as
\begin{align}
  &\bP(a|\boldw, A) = \exp\left\{-\sum_{i \in \partial a}(\boldw[i] +w^0) \right\} \delta(z_a) \nonumber\\
& \quad + \exp\left\{-\frac{z_a}{\mu}-\sum_{i \in \partial a}(\boldw[i] + w^0)\right\} \, \nonumber \\
& \qquad \qquad \quad \sum_{k=1}^{\infty}{\frac{z_a^{k-1}}{\mu^{k}(k-1)\! \,!k\!\,!} \, (\sum_{i \in \partial a}\boldw[i] + w^0 )^{k} }.
\label{eq:density_impact_weight}
\end{align}

For a given trace $\boldy$, let $F$ be the set of observed events.
Similar to section \ref{sec:data_model} assume that given $\boldw$ and $A$ distinct events (ion impacts) are independent.
Then the probability of observing the set of non-zero-weight events $F$ can be written as
\begin{align}\label{eq:density_impact_weight_joint}
\bP(F|\boldw, A) &= \prod_{a \in F} \bigg[ \exp\left\{-\frac{z_a}{\mu}-\sum_{i \in \partial a} (\boldw[i]+w^0) \right\} \nonumber \\
& \qquad \qquad \sum_{k=1}^{\infty}{\frac{z_a^{k-1} }{\mu^{k} {(k-1)\!\,!} {k\!\,!} } \, \, (\sum_{i \in \partial a}\boldw[i]+w^0)^{k} } \bigg].
\end{align}
Note that here we dropped the first term in Eq \eqref{eq:density_impact_weight} since for an observed ion impact event the weight $z_a$ is always positive and the first term vanishes. 

The next step is to write the negative log-likelihood function. 
There is a subtle point to be noted here. 
$F$ is the set of ion impact events with non-zero weights.
However, $z_a$ can be zero while $\sum_{i \in \partial a} \boldw[i] > 0$. 
We refer to such observations as zero-weight observations.
Zero-weight observations are informative and should be included in the log-likelihood function.
In our model the conditional probability of observing a zero-weight event $a$ in the interval $[\ut_a, \ot_a]$ is given by
\begin{align}\label{eq:density_zero_weight}
  \bP(a|\boldw, A) &= \exp\left\{- \sum_{i \in \partial a} (\boldw[i]+w^0) \right\} \nonumber \\
  & = \exp\left\{- \sum_{t = \ut_a}^{\ot_a} \langle A_t, (\boldw+w^0)\rangle \right\}.
\end{align}
The difficulty that seems to exist here is how to identify the zero-weight ion impact events when they can span more than one sample. 
However, as we shall see shortly, the particular form of the log-likelihood function for the zero-weight events eliminates the need to distinguish between adjacent zero-weight events for calculating the log-likelihood function. 

Let $F_0$ be the set of zero-weight observations and define $U_0 = \cup_{a \in F_0} [\ut_a, \ot_a]$, i.e., the union of all time intervals we observed zero-weight events at.
Again, making the assumption that distinct zero-weight ion impacts are independent events given $\boldw$ and $A$ we can write the joint probability of observing $F_0$ as
\begin{align}\label{eq:density_zero_weight_joint}
\bP(F_0|\boldw, A) &= \prod_{a \in F_0} \exp\left\{- \sum_{i \in \partial a} \boldw[i] \right\} \nonumber \\
&= \prod_{a \in F_0} \exp\left\{- \sum_{t = \ut_a}^{\ot_a} \langle A_t, \boldw\rangle \right\} \nonumber \\
&= \exp(- \sum_{t \in U_0} \langle A_t, \boldw\rangle).
\end{align}
Note that $\bP(F_0|\boldw, A)$ does not depend on the number of zero-weight events or the beginning or end time of a particular event. We need only to identify all the samples that are part of a zero-weight event, i.e., not part of any observed ion impact event. 

Putting Equations \eqref{eq:density_impact_weight_joint} and \eqref{eq:density_zero_weight_joint} together and assuming that all the events are independent we have the probability of observing a trace $\boldy$ as
\begin{align}\label{eq:generalized_prob_trace}
 & \bP(\boldy|\boldw, A) = \bP(F|\boldw, A)  \bP(F_0|\boldw, A)  \nonumber \\
 & \quad = \exp\left\{- \sum_{a \in F_0} \sum_{i \in \partial a} (\boldw[i] + w^0) \right\} \nonumber \\
& \qquad \exp\left\{ - \sum_{a \in F}\sum_{i \in \partial a}(\boldw[i]+w^0) - \frac{z_a}{\mu} \right\} \nonumber \\
& \qquad \prod_{a \in F} \bigg[ \sum_{k=1}^{\infty}{\frac{z_a^{k-1} }{\mu^{k} {(k-1)\!\,!} {k\!\,!} } \, \, \left(\sum_{i \in \partial a}(\boldw[i] + w^0)\right)^{k} } \bigg] \nonumber \\
& \quad = \exp\left\{- N \|\boldw\|_1 -\sum_{a \in F} \frac{z_a}{\mu} \right\} \nonumber \\
& \qquad\qquad  \prod_{a \in F} \left[ \, \sum_{k=1}^{\infty}{\frac{z_a^{k-1} }{\mu^{k} {(k-1)\!\,!} {k\!\,!} } \, \left(\sum_{i \in \partial a}(\boldw[i] + w^0)\right)^{k} } \right],
\end{align}
where the last equality is obtained since each sample on the trace is presented in either $F$ or $F_0$. 
The negative log-likelihood function then can be written as
\begin{align}\label{eq:generalized_neg_log_likelihood}
& \ell(\boldw| \boldy, A) = - N \|\boldw\|_1 \nonumber \\
& \qquad - \sum_{a \in F} \log \left[ \sum_{k=1}^{\infty}{\frac{z_a^{k} }{\mu^{k} {(k-1)\!\,!} {k\!\,!} } \, \left( \sum_{i \in \partial a}(\boldw[i] + w^0)\right)^{k}  } \right]\end{align}
After some algebra, and keeping only the terms that depend on $\boldw$,
\begin{align}
 \ell(\boldw| \boldy, A) = & - N \|\boldw\|_1  \nonumber \\
& - \sum_{a \in F} \bigg[ \frac{1}{2} \log\left(\sum_{i \in \partial a}\frac{(\boldw[i] + w^0)}{\mu} \right) \nonumber \\
&+ \log  \bessel_1 \left( 2 \sqrt{z_a \sum_{i \in \partial a} \frac{(\boldw[i] + w^0)}{\mu }} \right)  \bigg],
\end{align}
Using the same change of variable as before, namely $\boldtw = \frac{1}{\mu}\boldw$, $\lambda = N \mu$ and $\tell(\boldtw| \boldy, A) = \ell(\boldw| \boldy, A) - N \|\boldw\|_1$ we have $\ell(\boldw| \boldy, A) = \tell(\boldtw| \boldy, A) + \lambda \|\boldtw\|_1$ where
\begin{align}
 \tell(\boldtw| \boldy, A) = & - \sum_{a \in F} \bigg[ \frac{1}{2} \log\left(\sum_{i \in \partial a}(\boldtw[i] + w^0) \right) \nonumber \\
&+ \log  \bessel_1 \left( 2 \sqrt{z_a \sum_{i \in \partial a} (\boldtw[i] + w^0)} \right)  \bigg],
\end{align}
And the gradient of the function $ \tell(\boldw| \boldy, A) $ is 
\begin{align}\label{eq:generalized_neg_log_likelihood_gradient}
&\nabla \tell(\boldw| \boldy, A) =  - \sum_{a \in F} \sum_{t = \ut_a}^{\ot_a} A_t \Bigg[ \frac{1}{2\sum_{i \in \partial a}(\boldw[i] + w^0)} \nonumber \\
& \quad + \left(\bessel_0 ( 2 \sqrt{z_a \sum_{i \in \partial a}(\boldw[i] + w^0) } ) + \bessel_2 ( 2 \sqrt{z_a\sum_{i \in \partial a}(\boldw[i] + w^0) } )\right) \nonumber \\
& \quad \left(2 \sqrt{z_a \sum_{i \in \partial a}(\boldw[i] + w^0)} \bessel_1 ( 2 \sqrt{z_a \sum_{i \in \partial a}(\boldw[i] + w^0) } )\right)^{-1} \Bigg]  
\end{align}

Having the gradient of the log-likelihood function the algorithm is similar to the algorithm of section \ref{sec:algorithm} with the gradient of the log-likelihood calculated using Eq. \eqref{eq:generalized_neg_log_likelihood_gradient}.
However, we need some additional notations to represent the generalized algorithm.
Let $\deg(a)$ be the number of neighbors of event $a$, i.e., $\deg(a) = \sum_{i = 1}^{n} A_{\ut_a,i}$ \footnote{This definition is slightly inaccurate since an event can potentially fall on the boundary of an scan resulting in $\sum_{i = 1}^{n} A_{\ut_a,i} \neq \sum_{i = 1}^{n} A_{\ot_a,i}$ but this is rare and the discrepancy is negligible.}.
For $j \in [\deg(a)]$ let $\ui_a^j$ be the index of the $j^{\rm th}$ non-zero element of $A_{\ut_a}$. 
Similarly, let $\oi_a^j$ be the index of the $j^{\rm th}$ non-zero element of $A_{\ot_a}$. 
Then, $[\ui_a^j, \oi_a^j]$ is the true position of event $a$ on the spectrum if its $j^{th}$ neighbor corresponds to the true scan that caused event $a$.

The peak picking algorithms are usually sensitive to the shape of the pulses.
Furthermore, the time of arrival of the ions are noisier than the observation error of the instrument.
Observing many scans enables the instrument to measure the MCR of the ions with precision significantly better than the arrival noise level. 
To overcome the issue of a possible bias in the estimated MCR in our model, we employ one last trick.
The algorithm constructs an estimate of the spectrum $\widehat{\boldx}$ by assigning each observed event to its most likely neighbor (c.f. Fig.~\ref{fig:bipartite_graph}).
In other words, let
\begin{equation}
     j_* = \underset{j \in [\deg(a)]}{\arg\max} \; \sum_{i = \ui_a^j}^{\oi_a^j} \boldw[i].
\end{equation}
Then, given $\boldw$ we reconstruct the estimated spectrum as
\begin{equation}
     \widehat{\boldx}[\ui_a^{j^*} + \Delta] = \widehat{\boldx}[\ui_a^{j^*} + \Delta] + \boldy[t + \Delta].
\end{equation}
Using this notation the algorithm as as follows.

\begin{center}
     \begin{tabular}[c]{l}
     \hline
     \textbf{Algorithm} \\
     \hline
     \textbf{Input:} trace $\boldy$, firing times $\boldtau$, and constants ($\theta_0$, $\theta_1$, $\lambda$, $\gamma$)\\
     \textbf{Output:} estimated spectrum $\widehat{\boldx}$\\
     1:\quad Calculate the adjacency matrix $A$ as in Eq.~\eqref{eq:adjacency_matrix}.\\
     2:\quad  Set $\boldw^{(0)} = 0$, $\theta = \theta_0 + \theta_1$. \\
     2:\quad Repeat until stopping criterion is met:\\
       \quad \qquad $\theta \leftarrow \theta_0 + \frac{1}{k^2} \theta_1$\\
       \quad \qquad $\boldw^{(k+1)} \leftarrow \eta_{\theta}\left( \boldw^{(k)} - \gamma \nabla \tell(\boldw^{(k)}; \, \boldy, A) \right).$ \\
     3:\quad Set $\widehat{\boldx} = 0$\\
     4:\quad For $a \in F$:\\
     \quad \qquad $j^* = \underset{j \in [\deg(a)]}{\arg\max} \; \sum_{i = \ui_a^j}^{\oi_a^j} \boldw[i]$ \\
     \qquad \qquad For $\Delta = 0, \dots, (\ot_a - \ut_a)$:\\
     \qquad \quad \qquad $\widehat{\boldx}[\ui_a^{j^*} + \Delta] = \widehat{\boldx}[\ui_a^{j^*} + \Delta] + \boldy[t + \Delta]$\\
     5:\quad Return $\widehat{\boldx}$.\\
      \hline
      \end{tabular}
\end{center}
%

\bibliographystyle{IEEEbib}
\bibliography{lad_references}

\clearpage 


\begin{IEEEbiography}{Morteza Ibrahimi}
Morteza Ibrahimi received his PhD in Electrical Engineering from Stanford University in 2013,
working with professor Andrea Montanari. His research interests are in high
dimensional statistical signal processing, learning graphical models, optimization through
message passing algorithms, and statistical inference with high dimensional data, specially
through fast iterative algorithms. He received his BS from Sharif University of Technology
in 2006, and MSc from University of Toronto in 2007.
\end{IEEEbiography}

\begin{IEEEbiography}{Andrea Montanari}
Andrea Montanari received a Laurea degree in Physics in 1997, and a Ph. D. in Theoretical Physics in 2001 (both from Scuola Normale Superiore in Pisa, Italy). He has been post-doctoral fellow at Laboratoire de Physique Théorique de l'Ecole Normale Supérieure (LPTENS), Paris, France, and the Mathematical Sciences Research Institute, Berkeley, USA. Since 2002 he is Chargé de Recherche (with Centre National de la Recherche Scientifique, CNRS) at LPTENS. In September 2006 he joined Stanford University as a faculty, and since 2010 he is Associate Professor in the Departments of Electrical Engineering and Statistics.
He was co-awarded the ACM SIGMETRICS best paper award in 2008. He received the CNRS bronze medal for theoretical physics in 2006 and the National Science Foundation CAREER award in 2008.
\end{IEEEbiography}

\begin{IEEEbiography}{George Moore}
Goerge S. Moore is a research fellow at Agilent technology. He received his PhD 
in 1980 from Purdue University and his Bsc/Msc from Mississippi State University in 1974 all in
Electrical Engineering. He has over 40 years of experience in the industry.
\end{IEEEbiography}

\end{document}